\newcommand{\Dl}{\ensuremath{\Delta} }
\newcommand{\mc}[1]{\mathcal{#1}}
\newcommand{\mrm}[1]{\mathrm{#1}}
\newcommand{\mbf}[1]{\mathbf{#1}}
\newcommand{\mbb}[1]{\mathbb{#1}}
\newcommand{\comp}{{\scriptscriptstyle \ensuremath{\stackrel{\circ}{\mbox{}}}}}
\newcommand{\compc}{{\scriptscriptstyle \ensuremath{\stackrel{\star}{\mbox{}}}}}
\theoremstyle{theorem}
\newtheorem{thm}{\textsc{Theorem}}[section]
\newtheorem{prop}[thm]{\textsc{Proposition}}
\newtheorem{cor}[thm]{\textsc{Corollary}}
\theoremstyle{definition}
\newtheorem{defi}[thm]{\textsc{Definition}}
\newtheorem{obs}[thm]{\textbf{Remark}}
\begin{document}

\title{A derivability criterion based on the existence of adjunctions}

\author{Beatriz Rodr\'iguez Gonz\'alez\footnote{Partially supported by the ERC Starting Grant grant TGASS and the grants MTM2007-67908-C02-02, FQM-218 and SGR-119. 
The author gratefully acknowledges STORM department of London Metropolitan University 
for excellent working conditions.}\mbox{ }\footnote{\textit{email:} rgbea@icmat.es}\\
\begin{footnotesize}\textsc{Icmat-Csic-Complutense-Aut\'onoma-Carlos III}\end{footnotesize} }

\date{\empty}

\maketitle

\begin{abstract}
In this paper we introduce a derivability criterion of functors based on the existence of adjunctions rather than on the existence of resolutions. It constitutes 
a converse of Quillen-Maltsiniotis Derived Adjunction Theorem. We present two consequences of our derivability criterion. On the 
one hand, we prove that the two 
notions for homotopy colimits corresponding to Grothendieck derivators and Quillen model categories are equivalent.
On the other hand, we deduce that the internal hom for derived Morita theory constructed by B. To$\ddot{\mrm{e}}$n is indeed the right 
derived functor of the internal hom of dg-categories.
\begin{flushleft}\begin{tabular}{r}
\begin{small}{\textit{MSC:} 18A40, 18G10; 55U35, 18D15}\end{small}\\
\begin{small}{keywords: Homotopical Algebra; homotopy colimit; derived Morita theory} \end{small}
\end{tabular}
\end{flushleft}
\end{abstract}

\section{Introduction.}

In homological algebra, the classical criteria of existence of derived functors are always based on the existence of resolutions. 
For instance, it is a basic fact 
that if $\mc{A}$ is an abelian category with enough injectives then there exists the right derived functor $\mbb{R}F:\mc{D}^+(\mc{A})\rightarrow \mc{D}^+(\mc{B})$ 
of any left exact functor $F:\mc{A}\rightarrow\mc{B}$. More recently, the existence of the right derived functor 
$\mbb{R}F:\mc{D}(\mc{A})\rightarrow \mc{D}(\mc{B})$ of $F$ for unbounded complexes over a Grothendieck abelian category $\mc{A}$ is deduced in 
\cite{AJS} and \cite{S} from the existence of K-injective resolutions.
In the context of model categories, f{i}brant replacements ensure the existence of the right derived functor of any right Quillen functor (\cite{Q}). 
And there are other kinds of resolutions, for instance the right deformations of \cite{DHKS}, and the f{i}brant models of \cite{GNPR}.
A general notion of resolution that includes the preceding examples consists of the `structures de d{\'e}rivabilit{\'e}' developed in \cite{KM}, where 
a derivability criterion based on their existence is given.\\[0.2cm]
\indent In this paper we obtain a derivability criterion that uses a dif{f}erent approach:
instead of assuming the existence of resolutions, we assume the existence of adjunctions. It is motivated by the Quillen Derived Adjunction Theorem 
(\cite{Q}), as generalized by G. Maltsiniotis (\cite{M}):\\[0.1cm]
\noindent\textsc{Derived Adjunction Theorem.}
\textit{Let $F:\mc{C}\rightleftarrows \mc{D}: G$ be a pair of adjoint functors. Assume there exists $\mathbb{L}F$, the absolute left derived functor of $F$, and $\mbb{R}G$, the absolute right derived functor of $G$. Then $(\mbb{L}F, \mathbb{R}G)$ is again a pair of adjoint functors.}\\[0.2cm]
In other words, the Derived Adjunction Theorem states that adjunctions are preserved by taking absolute derived functors. 
Our derivability criterion is a converse of this fact:\\[0.2cm]
\textbf{Theorem \ref{criterion}.}
\textit{Let $F:\mc{C}\rightleftarrows \mc{D}: G$ be a pair of adjoint functors. Assume there exists $\mbb{L}F$, the absolute left derived functor of $F$. If $\mbb{L}F$ admits a right adjoint $G'$ 
$$\mbb{L}F: \mc{C}[\mc{W}^{-1}]\rightleftarrows \mc{D}[\mc{W}^{-1}]: G'$$
then the absolute right derived functor $\mbb{R}G$ of $G$ exists, and it agrees with $G'$.}\\[0.2cm]
We deduce this theorem from a result concerning iterated Kan extensions due to E. Dubuc (\cite{D}). 
Dually, for an adjunction $F:\mc{C}\rightleftarrows \mc{D}: G$ in which $G$ is absolutely right derivable 
and $F'$ is left adjoint to $\mbb{R}G$, then $\mbb{L}F$ exists and agrees with $F'$.\\[0.2cm]
\indent In the context of algebraic topology, a direct consequence of the above result is that
the two notions of homotopy colimit due to Grothen\-di\-eck and Quillen agree. 
More precisely, we deduce in Proposition \ref{GQ} that 
a functor is a homotopy colimit as defined in the setting of Grothendieck derivators (i.e. a left adjoint of the localized constant diagram functor) if and only if
it is a homotopy colimit as defined in the setting of Quillen model categories (i.e. an absolute left derived functor of the 
colimit). More generally, homotopy left Kan extensions may be equivalently def{i}ned \'a la Grothendieck or \'a la Quillen. 
A Corollary of this fact is that homotopy left Kan extensions \'a la Quillen are always composable 
(see Corollary \ref{composers}).\\[0.2cm]
\indent To finish, we describe a second consequence of Theorem \ref{criterion} that concerns B. To$\ddot{\mrm{e}}$n's internal 
hom for derived Morita theory, introduced in \cite{T}. Derived Morita theory is developed in loc. 
cit. using a suitable homotopy theory of dg-categories. A technical difficulty to do this was to show that the homotopy category of dg-categories, $Ho(dgcat)$, which 
is easily seen to be symmetric monoidal, is indeed a \textit{closed} symmetric monoidal category. This is proved directly in 
\cite{T}, providing an explicit 
construction for the internal hom, $\mc{R}\underline{\mrm{Hom}}(-,-)$, in $Ho(dgcat)$. Since 
this construction is not made though resolutions, 
it is not clear whether $\mc{R}\underline{\mrm{Hom}}(-,-)$ is the right derived functor of $\underline{\mrm{Hom}}(-,-)$ or not (see \cite{Ta2}). 
We settle this question proving in Corollary \ref{Toen} that this is indeed the case.\\

The paper is organized as follows. In section \ref{secPreliminaries} we introduce the preliminaries on absolute Kan extensions and absolute derived functors needed 
later. The third section contains the derivability criterion, and a corollary regarding the composition of derived functors. 
In section \ref{secGQ} we deduce the equivalence between the Quillen and Grothendieck notions of homotopy colimit. Finally, in the last section we apply our derivability criterion to derived internal homs,
deducing that To$\ddot{\mrm{e}}$n's internal hom of derived Morita theory is indeed the absolute right derived functor of the internal hom of dg-categories.\\

\noindent {\sc Acknowledgements:} I am indebted to V. Navarro Aznar for pointing out to me a more general form of the initial derivability criterion.  
I would also like to thank F. Muro, F. Guill{\'e}n Santos and A. Roig Maranges for useful discussions and comments. 
Finally, I gratefully acknowledge an anonymous referee on an earlier version of the manuscript 
for suggesting an alternative proof of the criterion based on mates theory.


\section{Preliminaries.}\label{secPreliminaries}

In this section we introduce some preliminaries regarding absolute Kan extensions and derived functors. We assume the reader is familiar with the basics on these 
topics, that may be found at \cite{ML}, or sections 2 and 3 of \cite{KM}, for instance.

\subsection{Absolute Kan extensions.}

Given functors $T: \mc{M}\rightarrow \mc{A}$ and $K:\mc{M}\rightarrow \mc{C}$, we denote by $(Ran_K T,\epsilon)$ the right Kan extension of $T$ along $K$ (if it exists). It consists of a functor $Ran_K T : \mc{C}\rightarrow\mc{A}$ and a natural transformation $\epsilon: Ran_K T \comp K\rightarrow T$, called the \textit{unit} of the Kan extension 
$$\xymatrix@M=4pt@H=4pt@C=35pt{
 \mc{M} \ar[r]^{T} \ar[d]_{K}  & \mc{A} \\
 \mc{C} \ar[ru]_{Ran_K T}^{\epsilon\,\,\mbox{\rotatebox[origin=c]{90}{$\Rightarrow$}}} & {}
}$$
that satisfy the usual universal property.
Recall that a right Kan extension $(Ran_K T,\epsilon)$ is characterized by the existence for each $L:\mc{C}\rightarrow\mc{A}$ of a natural bijection
$$\tau: \mrm{Nat}(L,Ran_K T)\longleftrightarrow \mrm{Nat}(L\comp K, T)$$
where $\mrm{Nat}(F,G)=\{$ natural transformations from $F$ to $G$ $\}$.
Indeed, given $(Ran_K T,\epsilon)$, $\tau$ is def{i}ned as the map that sends $\lambda: L\rightarrow Ran_K T$ to
$\epsilon\comp (\lambda\compc K): L\comp K\rightarrow T$. Conversely, given $\tau$ one takes $L=Ran_K T$, 
 $\epsilon=\tau(1_{Ran_K T}:Ran_K T\rightarrow Ran_K T)$.
A left Kan extension of $T$ along $K$ is def{i}ned dually by the existence of natural bijections
$$ \mrm{Nat}(Lan_K  T, L)\longleftrightarrow \mrm{Nat}(T, L\comp K)$$

\begin{defi}
A right Kan extension $(Ran_K T,\epsilon)$ is said to be \textit{absolute} if it is preserved by any functor. More concretely, given $S:\mc{A}\rightarrow \mc{B}$ then the right Kan extension of $S\comp T$ along $K$ exists and $Ran_K (S\comp T) = S \comp Ran_K T$. In addition, the unit $Ran_K(S\comp T)\comp K\rightarrow S\comp T$ is required to agree with $ S \compc \epsilon$. In this case, for each $L:\mc{C}\rightarrow\mc{A}$ there are bijections
$$ \mrm{Nat}(L, S \comp Ran_K T)\longleftrightarrow \mrm{Nat}(L\comp K, S\comp T)$$
natural on $S$ and $L$.
An \textit{absolute left Kan extension} of $T$ along $K$ is def{i}ned dually, so there are natural bijections
$ \mrm{Nat}(S\comp Lan_K  T, L)\longleftrightarrow \mrm{Nat}(S\comp T, L\comp K)$.
\end{defi}

Many of the constructions occurring in category theory may be expressed in terms of Kan extensions. An example is the case of adjunctions.

\begin{prop}\label{AdjKan} Given a functor $F:\mc{C}\rightarrow \mc{D}$, the following are equivalent:
\begin{compactitem}
\item[1.] $F$ admits a right adjoint $G:\mc{D}\rightarrow \mc{C}$.
\item[2.] There exists an absolute left Kan extension $G=Lan_F 1_{\mc{C}}$.
\item[3.] There exists a left Kan extension $G=Lan_F 1_{\mc{C}}$ and it is preserved by $F$.
\end{compactitem}
If these equivalent conditions hold, the units can be chosen in such a way that $\epsilon: 1_{\mc{C}}\rightarrow Lan_F 1_{\mc{C}}\comp F$ is both the unit of the adjunction and of the left Kan extension.
\end{prop}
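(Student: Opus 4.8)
The plan is to prove the chain of equivalences $1\Rightarrow 2\Rightarrow 3\Rightarrow 1$, exploiting the characterization of (absolute) Kan extensions in terms of natural bijections of $\mrm{Nat}$-sets recalled just above.

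For $1\Rightarrow 2$: suppose $F\dashv G$ with unit $\eta:1_{\mc C}\to GF$ and counit $\varepsilon:FG\to 1_{\mc D}$. I would show that $(G,\eta)$ is an absolute left Kan extension of $1_{\mc C}$ along $F$, i.e. that for every $S:\mc C\to\mc B$ and every $L:\mc D\to\mc B$ there is a bijection $\mrm{Nat}(S\comp G, L)\xrightarrow{\sim}\mrm{Nat}(S, L\comp F)$, natural in $S$ and $L$. The map sends $\alpha:S G\to L$ to $(\alpha\compc F)\comp(S\compc\eta):S\to SGF\to LF$. Its inverse sends $\beta:S\to LF$ to $(L\compc\varepsilon)\comp(\beta\compc G):SG\to LFG\to L$. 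That these are mutually inverse is precisely the two triangle identities for the adjunction $F\dashv G$, pasted appropriately; I would write out the two composites and cancel using $(\varepsilon F)\comp(F\eta)=1_F$ and $(G\varepsilon)\comp(\eta G)=1_G$. Naturality in $S$ and $L$ is a routine check against whiskering. Taking $S=1_{\mc C}$ recovers the ordinary universal property, so in particular $G=Lan_F 1_{\mc C}$ with unit $\eta$, and the extension is absolute.

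For $2\Rightarrow 3$: this is immediate, since an absolute left Kan extension is by definition preserved by every functor, in particular by $F$.

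For $3\Rightarrow 1$: suppose $(G,\eta)$ is a left Kan extension of $1_{\mc C}$ along $F$ that is preserved by $F$, meaning $(FG, F\compc\eta)$ is the left Kan extension of $F\comp 1_{\mc C}=F$ along $F$. I would construct the counit $\varepsilon:FG\to 1_{\mc D}$ as follows: the identity $1_F:F\to F=1_{\mc D}\comp F$ corresponds, under the universal property of the Kan extension $(FG,F\compc\eta)$ of $F$ along $F$, to a unique $\varepsilon:FG\to 1_{\mc D}$ with $(\varepsilon\compc F)\comp(F\compc\eta)=1_F$ — that is one triangle identity for free. For the other triangle identity, $(G\compc\varepsilon)\comp(\eta\compc G)=1_G$: here I would use that two natural transformations out of the left Kan extension $Lan_F 1_{\mc C}=G$ are equal as soon as their whiskerings with $\ldots$ agree — more precisely, apply the universal property of $(G,\eta)$ as $Lan_F 1_{\mc C}$ to show both sides, after precomposing appropriately with $\eta$, induce the same transformation $1_{\mc C}\to GF$; the first triangle identity just established is exactly what makes this work. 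Then $\eta,\varepsilon$ satisfy both triangle identities, so $F\dashv G$.

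The main obstacle is the verification of the \emph{second} triangle identity in $3\Rightarrow 1$: the preservation hypothesis directly yields the first triangle identity, but to get $(G\compc\varepsilon)\comp(\eta\compc G)=1_G$ one must feed the first identity back through the universal property of $G=Lan_F 1_{\mc C}$, checking that $(G\compc\varepsilon)\comp(\eta\compc G)$ and $1_G$, when composed on the right with $F$ and pasted with $\eta$, yield the same transformation $1_{\mc C}\to GF$; the uniqueness clause then forces them to coincide. Everything else — the construction of the bijections in $1\Rightarrow 2$, their mutual inverseness via the triangle identities, the naturality in $S$ and $L$, and the final sentence identifying the two possible choices of unit — is a formal whiskering computation. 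I would also remark that the dual statement (for right adjoints as absolute right Kan extensions) follows by applying this proposition in the opposite categories, which is what gets used later in the derivability criterion.
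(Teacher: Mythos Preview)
Your argument is correct and is precisely the content of the MacLane references the paper cites: the paper's proof consists only of pointing to \cite[Theorem X.7.2]{ML} for $1\Leftrightarrow 3$ and \cite[Proposition X.7.3]{ML} for $1\Rightarrow 2$, and your write-up unpacks exactly those proofs. In particular, your handling of the second triangle identity in $3\Rightarrow 1$ --- feeding the first identity back through the universal property of $G=Lan_F 1_{\mc C}$ via naturality of $\eta$ --- is the standard argument and goes through as you describe.
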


\begin{proof} The equivalence between 1 and 3 is \cite[Theorem X.7.2]{ML}. On the other hand 2 implies 3 is obvious, while 1 implies 2 is \cite[Proposition X.7.3]{ML}.
\end{proof}

The dual result states in particular that $G$ admits a left adjoint $F$ if and only if there exists an absolute right Kan extension $F=Ran_{G}1_{\mc{D}}$ of $1_{\mc{D}}:\mc{D}\rightarrow\mc{D}$ along $G$.\\

We will use the following result about iterated Kan extensions due to E. Dubuc. Its proof may be found in \cite[Proposition I.4.1]{D}

\begin{prop}\label{Dubuc}
  Consider functors $K:\mc{M}\rightarrow\mc{C}$, $L:\mc{C}\rightarrow\mc{D}$ and
  $T:\mc{M}\rightarrow\mc{A}$. Assume there exists $Ran_K T$, the right Kan extension of $T$ along $K$. Then there exists $Ran_{L\comp K} T$ if and only if there exists $Ran_L Ran_K T$. In this case, both agree and the units may be chosen in such a way that
  $$\epsilon_{Ran_{L\comp K}T} = \epsilon_{Ran_K T}\comp (\epsilon_{Ran_L Ran_K T}\compc K  )$$
  In addition, if $Ran_K T$ is an absolute right Kan extension, then $Ran_L Ran_K T$ is absolute if and only if $Ran_{L\comp K} T$ is.
\end{prop}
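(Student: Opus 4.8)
The plan is to work entirely through the representability characterization of right Kan extensions recalled above, whereby $(Ran_K T,\epsilon)$ is determined by the family of bijections $\mrm{Nat}(P,Ran_K T)\cong\mrm{Nat}(P\comp K,T)$, natural in $P:\mc{C}\rightarrow\mc{A}$. Since $Ran_K T$ is assumed to exist and, for any $P:\mc{D}\rightarrow\mc{A}$, the composite $P\comp L$ is again a functor $\mc{C}\rightarrow\mc{A}$, this characterization gives a bijection $\mrm{Nat}(P\comp L,Ran_K T)\cong\mrm{Nat}(P\comp L\comp K,T)$, natural in $P$. On the other hand, the very definition of $Ran_L Ran_K T$ provides a natural bijection $\mrm{Nat}(P,Ran_L Ran_K T)\cong\mrm{Nat}(P\comp L,Ran_K T)$, and that of $Ran_{L\comp K}T$ a natural bijection $\mrm{Nat}(P,Ran_{L\comp K}T)\cong\mrm{Nat}(P\comp L\comp K,T)$. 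Hence, whenever they exist, both $Ran_L Ran_K T$ and $Ran_{L\comp K}T$ represent one and the same $\mrm{Set}$-valued functor on $[\mc{D},\mc{A}]^{\comp}$, namely $P\mapsto\mrm{Nat}(P\comp L\comp K,T)$.

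The first two assertions then fall out formally. If $Ran_L Ran_K T$ exists, composing its defining bijection with the one coming from $Ran_K T$ exhibits it as a right Kan extension of $T$ along $L\comp K$, so $Ran_{L\comp K}T$ exists and agrees with it; conversely, if $Ran_{L\comp K}T$ exists, the composite bijection $\mrm{Nat}(P,Ran_{L\comp K}T)\cong\mrm{Nat}(P\comp L\comp K,T)\cong\mrm{Nat}(P\comp L,Ran_K T)$ exhibits $Ran_{L\comp K}T$ as a right Kan extension of $Ran_K T$ along $L$, so $Ran_L Ran_K T$ exists and agrees with it. For the unit I would evaluate the total bijection at $1_R$, where $R:=Ran_{L\comp K}T=Ran_L Ran_K T$: the defining bijection of $Ran_L Ran_K T$ sends $1_R$ to $\epsilon_{Ran_L Ran_K T}:R\comp L\rightarrow Ran_K T$, and the bijection from $Ran_K T$ (with the functor $R\comp L$ in the test slot) then sends this to $\epsilon_{Ran_K T}\comp(\epsilon_{Ran_L Ran_K T}\compc K)$; since by definition the total bijection sends $1_R$ to $\epsilon_{Ran_{L\comp K}T}$, this is exactly the asserted formula.

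For the absoluteness clause, the idea is to rerun the first part with $S\comp T$ in place of $T$, for an arbitrary $S:\mc{A}\rightarrow\mc{B}$. Since $Ran_K T$ is absolute, $Ran_K(S\comp T)$ exists, equals $S\comp Ran_K T$, and has unit $S\compc\epsilon_{Ran_K T}$; so the first part gives that $Ran_{L\comp K}(S\comp T)$ exists iff $Ran_L(S\comp Ran_K T)$ does, that the two agree, and that the unit of $Ran_{L\comp K}(S\comp T)$ is $(S\compc\epsilon_{Ran_K T})\comp(\epsilon_{Ran_L(S\comp Ran_K T)}\compc K)$. Now if $Ran_L Ran_K T$ is absolute, then $Ran_L(S\comp Ran_K T)=S\comp Ran_L Ran_K T$ with unit $S\compc\epsilon_{Ran_L Ran_K T}$; feeding this in, using the identification $Ran_L Ran_K T=Ran_{L\comp K}T$ from the first part, and invoking the interchange law to rewrite $(S\compc\epsilon_{Ran_K T})\comp((S\compc\epsilon_{Ran_L Ran_K T})\compc K)$ as $S\compc(\epsilon_{Ran_K T}\comp(\epsilon_{Ran_L Ran_K T}\compc K))=S\compc\epsilon_{Ran_{L\comp K}T}$, one gets $Ran_{L\comp K}(S\comp T)=S\comp Ran_{L\comp K}T$ with the expected unit, for every $S$ — that is, $Ran_{L\comp K}T$ is absolute. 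The reverse implication is an entirely analogous computation read backwards. The only real work here should be this last piece of bookkeeping — keeping the unit identifications compatible with whiskering by $S$ and with the interchange law — since the existence and agreement statements are nothing more than the uniqueness of representing objects.
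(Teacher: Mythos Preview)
Your argument is correct: the representability characterization reduces both existence statements to the same presheaf $P\mapsto\mrm{Nat}(P\comp L\comp K,T)$ on $[\mc{D},\mc{A}]^{\comp}$, the unit formula drops out by tracing the identity, and the absoluteness clause follows by rerunning the same bijections with $S\comp T$ in place of $T$ (the reverse implication being the observation that the composite bijection $\mrm{Nat}(P,S\comp Ran_{L\comp K}T)\cong\mrm{Nat}(P\comp L\comp K,S\comp T)\cong\mrm{Nat}(P\comp L,S\comp Ran_K T)$ exhibits $S\comp Ran_L Ran_K T$ as $Ran_L(S\comp Ran_K T)$, with unit $S\compc\epsilon_{Ran_L Ran_K T}$ recovered exactly as you describe). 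Note that the paper does not actually supply a proof here---it simply cites \cite[Proposition I.4.1]{D}---so there is no in-text argument to compare against; your proof is the standard one and is presumably what the cited reference contains.
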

\subsection{Absolute derived functors.}


Given a class $\mc{W}$ of morphisms in a category $\mc{C}$, the \textit{localization} of $\mc{C}$ with respect to $\mc{W}$ is the result of formally inverting the
morphisms of $\mc{W}$ in $\mc{C}$. This gives a (possibly big category) $\mc{C}[\mc{W}^{-1}]$ plus a localization functor
$\gamma_{\mc{C}}:\mc{C}\rightarrow \mc{C}[\mc{W}^{-1}]$ sending the morphisms of $\mc{W}$ to isomorphisms, and inducing for each category $\mc{D}$ an equivalence
of categories
$$- \compc \gamma_{\mc{C}}: Cat(\mc{C}[\mc{W}^{-1}],\mc{D}) \longrightarrow Cat_{\mc{W}}(\mc{C},\mc{D})$$
Here $Cat(\mc{C}[\mc{W}^{-1}],\mc{D})$ is the category of functors $G':\mc{C}[\mc{W}^{-1}]\rightarrow\mc{E}$ and $Cat_{\mc{W}}(\mc{C},\mc{D})$ is the full 
subcategory of $Cat(\mc{C},\mc{D})$ formed by those $G:\mc{C}\rightarrow\mc{E}$ that send the morphisms in $\mc{W}$ to isomorphisms.\\

If $F:\mc{C}\rightarrow\mc{D}$, the \textit{left derived functor} of $F$ (if it exists), is the right Kan
extension $(\mathbb{L}F=Ran_{\gamma_{\mc{C}}}F, \epsilon: \mbb{L}F\comp \gamma_{\mc{C}}\rightarrow F)$.
In case $\mathbb{L}F$ is in addition an absolute right Kan extension of $F$ along $\gamma_{\mc{C}}$, then $\mbb{L}F$ 
is called the \textit{absolute} left derived functor of $F$. In particular, for each $S:\mc{D}\rightarrow\mc{E}$,
$\mbb{L}(S\comp F) = S\comp \mbb{L}F$.\\

If $\mc{D}$ is also equipped with a distinguished class of morphisms, which we also write as $\mc{W}$,  then the (\textit{absolute}) \textit{total left derived 
functor} of $F:\mc{C}\rightarrow\mc{D}$ with respect to the classes $\mc{W}$ of $\mc{C}$ and $\mc{D}$
is the (absolute) right Kan extension of $\gamma_{\mc{D}}\comp F$ along $\gamma_{\mc{C}}$. It is also denoted by $\mbb{L}F$, and 
this time 
$$\mbb{L}F = Ran_{\gamma_{\mc{C}}}\gamma_{\mc{D}} \comp F$$
In the absolute case, for each functor $S:\mc{D}[\mc{W}^{-1}]\rightarrow \mc{E}$ it holds that  
$$S\comp \mbb{L}F = Ran_{\gamma_{\mc{C}}}S\comp \gamma_{\mc{D}} \comp F$$ 

In what follows we only consider the total case. Then, for the sake of brevity we will drop the `total' adjective and 
just say that $F$ has an (absolute) left derived functor $\mbb{L}F:\mc{C}[\mc{W}^{-1}]\rightarrow \mc{D}[\mc{W}^{-1}]$.\\
Dually, $F$ is said to admit an (\textit{absolute}) \textit{right derived functor} if there exists the (absolute) left Kan
extension $(\mathbb{R}F=Lan_{\gamma_{\mc{C}}}\gamma_{\mc{D}}\comp F, \epsilon: \gamma_{\mc{D}}\comp F \rightarrow \mbb{R}F\comp \gamma_{\mc{C}} )$.\\

From now on we f{i}x a class $\mc{W}$ of morphisms, called weak equivalences, in the categories considered, and left or right derived 
functors are always def{i}ned with respect to these classes.\\

Derived functors are typically obtained in practice through the existence of some kind 
of resolutions in $\mc{C}$. A general example is Quillen's Theorem of Existence of Derived Functors, in the context of model 
categories.

\begin{thm}\emph{(\cite{Q})}\label{QAT}
Let $F:\mc{M}\rightarrow \mc{D}$  be a functor from a Quillen model category $\mc{M}$ to a category $\mc{D}$. 
If $F$ sends weak equivalences between cof{i}brant objects to weak equivalences, then 
the left derived functor of $F$ exists and may be computed composing $F$ with a cof{i}brant replacement. Dually, if 
$F$ sends weak equivalences between f{i}brant objects to weak equivalences,
then the right derived functor of $F$ exists and may be computed composing $F$ with a f{i}brant replacement.
\end{thm}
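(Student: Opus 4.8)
The plan is to prove the statement for $\mbb{L}F$; the assertion for $\mbb{R}F$ then follows by applying it to the opposite functor $F^{\comp}:\mc{M}^{\comp}\rightarrow\mc{D}^{\comp}$, using that $\mc{M}^{\comp}$ is again a model category, whose cofibrant objects are the fibrant objects of $\mc{M}$. Recall that here $\mbb{L}F$ denotes $Ran_{\gamma_{\mc{M}}}(\gamma_{\mc{D}}\comp F)$, so what must be produced is a functor $\mbb{L}F:\mc{M}[\mc{W}^{-1}]\rightarrow\mc{D}[\mc{W}^{-1}]$ together with a transformation $\epsilon:\mbb{L}F\comp\gamma_{\mc{M}}\rightarrow\gamma_{\mc{D}}\comp F$ satisfying the universal property of that right Kan extension.

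First I would fix a cofibrant replacement: for each $X$ in $\mc{M}$, factor $\emptyset\rightarrow X$ as a cofibration $\emptyset\rightarrow QX$ followed by a trivial fibration $q_X:QX\rightarrow X$. Using functorial factorizations this makes $Q:\mc{M}\rightarrow\mc{M}$ a functor and $q:Q\rightarrow 1_{\mc{M}}$ a natural transformation; in Quillen's original setting $Qf$ is only well defined up to left homotopy, which is precisely where the homotopy invariance recorded below enters. The two-out-of-three axiom shows that $Q$ carries weak equivalences to weak equivalences between cofibrant objects, so by hypothesis $\gamma_{\mc{D}}\comp F\comp Q$ inverts $\mc{W}$; by the universal property of $\gamma_{\mc{M}}$ it therefore factors uniquely as $\mbb{L}F\comp\gamma_{\mc{M}}$ for a functor $\mbb{L}F:\mc{M}[\mc{W}^{-1}]\rightarrow\mc{D}[\mc{W}^{-1}]$, which is already the asserted description of $\mbb{L}F$ as $F$ composed with a cofibrant replacement. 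For $\epsilon$ I take the component $\epsilon_X=\gamma_{\mc{D}}F(q_X)$, a map $\mbb{L}F(\gamma_{\mc{M}}X)=\gamma_{\mc{D}}F(QX)\rightarrow\gamma_{\mc{D}}F(X)$; when $X$ is itself cofibrant, $q_X$ is a weak equivalence between cofibrant objects, so $\epsilon_X$ is an isomorphism.

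The technical core --- the step I expect to cost the most work --- is the homotopy invariance of $\gamma_{\mc{D}}\comp F$ on cofibrant objects: if $A$ is cofibrant and $f,g:A\rightarrow B$ are left homotopic, then $\gamma_{\mc{D}}F(f)=\gamma_{\mc{D}}F(g)$. This is proved by factoring the fold map $A\amalg A\rightarrow A$ as a cofibration $A\amalg A\rightarrow\mathrm{Cyl}(A)$ followed by a weak equivalence $\sigma:\mathrm{Cyl}(A)\rightarrow A$: since $A$ is cofibrant so are $A\amalg A$ and hence $\mathrm{Cyl}(A)$, so $\sigma$ is a weak equivalence between cofibrant objects, $\gamma_{\mc{D}}F(\sigma)$ is invertible, and therefore $\gamma_{\mc{D}}F(i_0)=\gamma_{\mc{D}}F(i_1)$ for the two end-inclusions $i_0,i_1:A\rightarrow\mathrm{Cyl}(A)$; composing with a left homotopy realized on $\mathrm{Cyl}(A)$ (possible because $A$ is cofibrant) yields $\gamma_{\mc{D}}F(f)=\gamma_{\mc{D}}F(g)$. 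Equivalently, $\gamma_{\mc{D}}\comp F$ factors through the homotopy category of cofibrant objects; this is also what guarantees, in the absence of functorial factorizations, that $\gamma_{\mc{D}}\comp F\comp Q$ is a well-defined functor $\mc{M}\rightarrow\mc{D}[\mc{W}^{-1}]$ in spite of the homotopy-ambiguity of the choices $Qf$.

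With that in hand the universal property is bookkeeping. Given $H:\mc{M}[\mc{W}^{-1}]\rightarrow\mc{D}[\mc{W}^{-1}]$ and $\theta:H\comp\gamma_{\mc{M}}\rightarrow\gamma_{\mc{D}}\comp F$, I set $\bar\theta_{\gamma_{\mc{M}}X}:=\theta_{QX}\comp\bigl(H\gamma_{\mc{M}}(q_X)\bigr)^{-1}$, which is meaningful because $\gamma_{\mc{M}}(q_X)$ is invertible. Naturality of $\bar\theta$ with respect to a morphism $\gamma_{\mc{M}}(f)$ follows from naturality of $\theta$ at $Qf$ together with naturality of $q$, and extends formally to all morphisms of $\mc{M}[\mc{W}^{-1}]$; the triangle identity $\epsilon\comp(\bar\theta\compc\gamma_{\mc{M}})=\theta$ follows from naturality of $\theta$ at $q_X$; and uniqueness holds because $\epsilon_X$ is invertible for cofibrant $X$ while every object of $\mc{M}[\mc{W}^{-1}]$ is isomorphic, via $\gamma_{\mc{M}}(q_X)$, to the cofibrant object $QX$, so the components of any comparison on cofibrant objects are forced. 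Finally, every ingredient used here is stable under postcomposition with an arbitrary $S:\mc{D}[\mc{W}^{-1}]\rightarrow\mc{E}$: the functor $S\comp\gamma_{\mc{D}}\comp F\comp Q$ still inverts $\mc{W}$, $S\compc\epsilon$ is still invertible on cofibrant objects, and $S\comp\gamma_{\mc{D}}\comp F$ still kills left homotopies out of cofibrant objects; so the same argument gives $S\comp\mbb{L}F=Ran_{\gamma_{\mc{M}}}(S\comp\gamma_{\mc{D}}\comp F)$, and hence the right Kan extension is absolute, i.e. $\mbb{L}F$ is the absolute left derived functor.
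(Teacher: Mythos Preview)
The paper does not prove this theorem at all: it is stated with a citation to Quillen and immediately followed by remarks. Your proposal, by contrast, supplies a complete argument, and it is the classical one (essentially Quillen's own, as presented in modern expositions such as Hovey or Dwyer--Spali\'nski): build $\mbb{L}F$ as $\gamma_{\mc{D}}\comp F\comp Q$ for a cofibrant replacement $(Q,q)$, take $\epsilon=\gamma_{\mc{D}}\compc F\compc q$, and verify the universal property using that $\epsilon_X$ is invertible whenever $X$ is cofibrant together with the fact that every object of $\mc{M}[\mc{W}^{-1}]$ is isomorphic to a cofibrant one. The homotopy-invariance step (left homotopic maps out of a cofibrant source are identified by $\gamma_{\mc{D}}\comp F$) is exactly the right technical core, and your cylinder argument for it is correct. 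The one expository wrinkle is that in the non-functorial-factorization setting the dependencies run backwards from your presentation --- the homotopy invariance of paragraph three is logically prior to the construction in paragraph two --- but you flag this explicitly, so there is no gap.

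As a bonus, your final paragraph shows that the Kan extension is absolute, which is not part of the theorem as stated but is precisely the content of the remark the paper places immediately after it; so your argument in fact covers both the theorem and that remark.
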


\begin{obs}\label{QAT2} Although no mention to absoluteness is made in the original form of the above theorem, 
those derived functors obtained through cof{i}brant or f{i}brant resolutions are easily seen to be absolute derived functors 
(see \cite[p.2]{M}).
\end{obs}

 Previous theorem shows that a suf{f}{i}cient condition on a functor def{i}ned on a model category for being absolutely right derivable is that it preserves 
weak equivalences between f{i}brant objects. Indeed, some authors directly def{i}ne the (absolute) right derived functor of such a functor as its composition 
with a f{i}brant replacement, instead as its (absolute) right Kan extension. Both def{i}nitions agree in this case by the above theorem.\\ 

But note however that this is not a necessary condition on a functor to be absolutely right derivable: 
just consider a model category in which \textit{all} objects are fibrant, 
for instance the projective model structure on complexes of abelian groups.
With this example in mind it is clear that there are many functors that do not preserve weak equivalences between 
f{i}brant (i.e. all) objects but still admit an absolute right derived functor. 
Another example is the internal hom of dg-categories (see section \ref{dM}).


\section{Derivability criterion.}

In this section we introduce the derivability criterion (Theorem \ref{criterionLF}), and study the composition of derived functors in the presence of adjoints. 
We begin by recalling Quillen's Derived Adjunction Theorem (\cite{Q}), as generalized by G. Maltsiniotis (\cite{M}):\\[0.2cm]
\textbf{Derived Adjunction Theorem.}
\textit{Let $F:\mc{C}\rightleftarrows \mc{D}: G$ be a pair of adjoint functors. Assume there exists $\mathbb{L}F$, the absolute left derived functor of $F$, and $\mbb{R}G$, the absolute right derived functor of $G$. Then
$$\mbb{L}F: \mc{C}[\mc{W}^{-1}]\rightleftarrows \mc{D}[\mc{W}^{-1}]: \mathbb{R}G$$
is a pair of adjoint functors. In addition, the adjunction morphisms $a':\mbb{L}F\comp \mathbb{R}G \rightarrow 1$ and $b':1\rightarrow \mathbb{R}G\comp\mbb{L}F$ may be chosen in such a way
that the two squares below commute
\begin{equation}\label{comptMorf}\xymatrix@M=4pt@H=4pt@C=35pt{
 \mbb{L}F\comp\gamma_{\mc{C}} \comp G \ar[r]^-{\mbb{L}F\compc \varepsilon} \ar[d]_-{\epsilon\compc G}  & \mbb{L}F\comp \mbb{R}G\comp \gamma_{\mc{D}} \ar[d]^-{a'\compc\gamma_{\mc{D}}}  & & \mbb{R}G\comp\gamma_{\mc{D}}\comp F  & \mbb{R}G\comp\mbb{L}F\comp\gamma_{\mc{C}}\ar[l]_-{\mbb{R}G\compc \epsilon} \\
 \gamma_{\mc{D}}\comp F\comp G \ar[r]_-{\gamma_{\mc{D}}\compc a} & {\gamma_{\mc{D}}} & & \gamma_{\mc{C}}\comp G\comp F \ar[u]^{\varepsilon\compc F} & \ar[l]_-{\gamma_{\mc{C}}\compc b} \gamma_{\mc{C}} \ar[u]_{b'\compc\gamma_{\mc{C}}}
}\end{equation}
Here, $\epsilon$ and $\varepsilon$ denote the respective units of $\mbb{L}F$ and $\mbb{R}G$, while $a$ and $b$ are the adjunction morphisms of $(F,G)$.}\\[0.2cm]
The following converse of previous theorem constitutes a derivability criterion of functors based on the existence 
of adjunctions rather than on the existence of resolutions.

\begin{thm}\label{criterionLF} Let $F:\mc{C}\rightleftarrows \mc{D}: G$ be a pair of adjoint functors. Assume there exists $\mbb{R}G$, the absolute right derived functor of $G$. If $\mbb{R}G$ admits a left adjoint $F'$
$$F': \mc{C}[\mc{W}^{-1}]\rightleftarrows \mc{D}[\mc{W}^{-1}]: \mathbb{R}G$$
then the absolute left derived functor $\mbb{L}F$ of $F$ exists, and it agrees with $F'$. In addition, the unit $\epsilon:F'\comp\gamma_{\mc{C}}\rightarrow\gamma_{\mc{D}}\comp F$ may be chosen in such a way that the two squares in \emph{(\ref{comptMorf})} commute.
\end{thm}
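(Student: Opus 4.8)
The strategy is to express $\gamma_{\mc{D}}\comp F$ and $F'$ as absolute right Kan extensions and then invoke Dubuc's Proposition~\ref{Dubuc}. By the dual of Proposition~\ref{AdjKan}, the adjunction $(F,G)$ exhibits $F$ as an absolute right Kan extension $F=Ran_G 1_{\mc{D}}$, with unit the counit $a:F\comp G\rightarrow 1_{\mc{D}}$; post-composing with $\gamma_{\mc{D}}$ and using absoluteness,
$$\gamma_{\mc{D}}\comp F \;=\; \gamma_{\mc{D}}\comp Ran_G 1_{\mc{D}} \;=\; Ran_G\gamma_{\mc{D}}$$
is again an absolute right Kan extension, with unit $\gamma_{\mc{D}}\compc a$. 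Similarly, the hypothesis that $(F',\mbb{R}G)$ is an adjunction exhibits $F'$ as an absolute right Kan extension $F'=Ran_{\mbb{R}G}1_{\mc{D}[\mc{W}^{-1}]}$, with unit the counit $a':F'\comp\mbb{R}G\rightarrow 1_{\mc{D}[\mc{W}^{-1}]}$.

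The central point is to identify $F'$ as the absolute right Kan extension of $\gamma_{\mc{D}}$ along $\gamma_{\mc{C}}\comp G$. For every pair of functors $S:\mc{D}[\mc{W}^{-1}]\rightarrow\mc{E}$ and $L:\mc{C}[\mc{W}^{-1}]\rightarrow\mc{E}$ there are bijections
$$\mrm{Nat}(L,\,S\comp F')\;\cong\;\mrm{Nat}(L\comp\mbb{R}G,\,S)\;\cong\;\mrm{Nat}(L\comp\gamma_{\mc{C}}\comp G,\,S\comp\gamma_{\mc{D}}),$$
natural in $S$ and $L$: the first is the absoluteness of $F'=Ran_{\mbb{R}G}1_{\mc{D}[\mc{W}^{-1}]}$, and the second combines the absoluteness of $\mbb{R}G=Lan_{\gamma_{\mc{D}}}(\gamma_{\mc{C}}\comp G)$ (which gives $L\comp\mbb{R}G=Lan_{\gamma_{\mc{D}}}(L\comp\gamma_{\mc{C}}\comp G)$) with the defining bijection of the left Kan extension. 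Taking $S=1_{\mc{D}[\mc{W}^{-1}]}$ and using naturality in $S$, this says precisely that $F'=Ran_{\gamma_{\mc{C}}\comp G}\gamma_{\mc{D}}$ is an absolute right Kan extension; chasing $1_{F'}$ through the chain identifies its unit as $u:=(a'\compc\gamma_{\mc{D}})\comp(F'\compc\varepsilon):F'\comp\gamma_{\mc{C}}\comp G\rightarrow\gamma_{\mc{D}}$, where $\varepsilon:\gamma_{\mc{C}}\comp G\rightarrow\mbb{R}G\comp\gamma_{\mc{D}}$ is the unit of $\mbb{R}G$.

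Now apply Proposition~\ref{Dubuc} with $K=G$, $L=\gamma_{\mc{C}}$, $T=\gamma_{\mc{D}}$: the right Kan extension $Ran_K T=Ran_G\gamma_{\mc{D}}=\gamma_{\mc{D}}\comp F$ exists and is absolute, and $Ran_{L\comp K}T=Ran_{\gamma_{\mc{C}}\comp G}\gamma_{\mc{D}}=F'$ exists and is absolute; hence $Ran_L Ran_K T=Ran_{\gamma_{\mc{C}}}(\gamma_{\mc{D}}\comp F)=\mbb{L}F$ exists, is absolute, and coincides with $F'$. Moreover, by Dubuc's compatibility of units, the unit $\epsilon:F'\comp\gamma_{\mc{C}}\rightarrow\gamma_{\mc{D}}\comp F$ of $\mbb{L}F$ may be chosen so that $u=(\gamma_{\mc{D}}\compc a)\comp(\epsilon\compc G)$. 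With this choice the left-hand square of~(\ref{comptMorf}) is exactly the equality $(\gamma_{\mc{D}}\compc a)\comp(\epsilon\compc G)=u=(a'\compc\gamma_{\mc{D}})\comp(\mbb{L}F\compc\varepsilon)$, so it commutes.

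It remains to check the right-hand square of~(\ref{comptMorf}), which should follow formally. I would transpose its two sides, which are natural transformations $\gamma_{\mc{C}}\rightarrow\mbb{R}G\comp\gamma_{\mc{D}}\comp F$, across the adjunction $(F',\mbb{R}G)$: the transpose of $(\mbb{R}G\compc\epsilon)\comp(b'\compc\gamma_{\mc{C}})$ is $\epsilon$ by the very definition of the transpose, while the transpose of $(\varepsilon\compc F)\comp(\gamma_{\mc{C}}\compc b)$ equals
$$(a'\compc\gamma_{\mc{D}}\compc F)\comp(F'\compc\varepsilon\compc F)\comp(F'\compc\gamma_{\mc{C}}\compc b);$$
rewriting $(a'\compc\gamma_{\mc{D}})\comp(F'\compc\varepsilon)$ via the left square as $(\gamma_{\mc{D}}\compc a)\comp(\epsilon\compc G)$, then moving $\epsilon$ past $b$ by naturality, and finally using the triangle identity $(a\compc F)\comp(F\compc b)=1_F$ of $(F,G)$, this collapses to $\epsilon$ as well. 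Since the two sides of the right square have the same transpose, they agree. I expect this last paragraph --- keeping the units $a,b,a',b',\epsilon,\varepsilon$ and the Dubuc-produced Kan-extension units mutually coherent --- to be the main bookkeeping obstacle; the rest is a direct assembly of Propositions~\ref{AdjKan} and~\ref{Dubuc} with the definition of absoluteness.
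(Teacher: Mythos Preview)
Your proof is correct and follows essentially the same approach as the paper's: both identify $\gamma_{\mc{D}}\comp F=Ran_G\gamma_{\mc{D}}$ and $F'=Ran_{\gamma_{\mc{C}}\comp G}\gamma_{\mc{D}}$ as absolute right Kan extensions (the latter via the same chain of bijections coming from the two adjunctions and the absoluteness of $\mbb{R}G$), then apply Dubuc's Proposition~\ref{Dubuc} to produce $\mbb{L}F=F'$ together with the unit compatibility giving the left square, and finally verify the right square by transposing both sides across $(F',\mbb{R}G)$ and reducing to the triangle identity of $(F,G)$. Your presentation differs only in ordering and phrasing, not in substance.
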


\begin{proof}
We must prove that $F'$ is the absolute right Kan extension $Ran_{\gamma_{\mc{C}}}\gamma_{\mc{D}}\comp F$ of $\gamma_{\mc{D}}\comp F$ along $\gamma_{\mc{C}}$. 
Consider $\mc{S}:\mc{D}[\mc{W}^{-1}]\rightarrow \mc{E}$. Since $(F',\mbb{R}G)$ is a pair of adjoint functors, it follows from Proposition \ref{AdjKan} that $S\comp F' = Ran_{\mbb{R}G} S$ with unit $S\compc a' : S\comp F'\comp \mbb{R}G \rightarrow S$. On the other hand,
by assumption $\mbb{R}G$ is the absolute right derived functor of $G$. Therefore, given $L:\mc{C}[\mc{W}^{-1}]\rightarrow \mc{E}$
then $L\comp \mbb{R}G = Lan_{\gamma_{\mc{D}}} L\comp \gamma_{\mc{C}}\comp G$ with unit $L\compc \varepsilon : L\comp\gamma_{\mc{C}}\comp G \rightarrow  L\comp \mbb{R}G \comp \gamma_{\mc{D}}$. Putting all together, we have the bijections
$$\begin{array}{ccccc}
 Nat(L,S\comp F') & \longrightarrow & Nat(L\comp \mbb{R}G, S) & \longrightarrow & Nat(L\comp \gamma_{\mc{C}}\comp G, S\comp \gamma_{\mc{D}})\\[0.1cm]
 \lambda & \mapsto & (S\compc a')\comp (\lambda\compc \mbb{R}G) & \mapsto  & (S\compc a' \compc \gamma_{\mc{D}})\comp (\lambda\compc (\mbb{R}G\comp \gamma_{\mc{D}}))\comp (L\compc\varepsilon)=\\
         &         &                                            &          & = (S\compc a' \compc \gamma_{\mc{D}})\comp  ((S\comp F') \compc \varepsilon )) \comp (\lambda\compc (\gamma_{\mc{C}}\comp G))                            
 \end{array}
$$
Then, we deduce that $F'$ is the absolute right Kan extension $Ran_{\gamma_{\mc{C}}\comp G}\gamma_{\mc{D}}$ with unit the composition
$$\xymatrix@M=4pt@H=4pt@C=40pt{ F'\comp \gamma_{\mc{C}} \comp G \ar[r]^-{ F'\compc \varepsilon} & F' \comp \mbb{R}G\comp\gamma_{\mc{D}} \ar[r]^-{ a'\compc \gamma_{\mc{D}}} & \gamma_{\mc{D}} }$$
On the other hand, since $(F,G)$ is a pair of adjoint functors then $F$ is the absolute right Kan extension $Ran_{G} 1_{\mc{D}}$ with unit 
$a : F\comp G\rightarrow 1_{\mc{D}}$, by Proposition \ref{AdjKan}. In particular, $\gamma_{\mc{D}}\comp F$ is the absolute 
right Kan extension $Ran_{G}\gamma_{\mc{D}}$ with unit
$\gamma_{\mc{D}}\compc a : \gamma_{\mc{D}}\comp F\comp G\rightarrow \gamma_{\mc{D}}$.
Summarizing all we have proved the existence of
$$Ran_{G} \gamma_{\mc{D}}= \gamma_{\mc{D}}\comp F \ \ \mbox{ and }\ \ Ran_{\gamma_{\mc{C}}\comp G} \gamma_{\mc{D}}=  F'$$
By Dubuc's result \ref{Dubuc} we deduce that $Ran_{\gamma_{\mc{C}}}Ran_{G} \gamma_{\mc{D}}= Ran_{\gamma_{\mc{C}}} \gamma_{\mc{D}}\comp F$ exists, it is absolute, and it agrees with $ F'$. In addition, we may choose the unit 
$\epsilon: (Ran_{\gamma_{\mc{C}}}\gamma_{D}\comp F)\comp \gamma_{C}\rightarrow \gamma_{\mc{C}}\comp F$ in such a way such that
$$\xymatrix@M=4pt@H=4pt@C=40pt{ F' \comp \gamma_{\mc{C}} \comp G \ar[r]^{\epsilon\compc G}\ar[rd]_{(a'\compc \gamma_{\mc{D}})\comp (F'\compc \varepsilon)\,\,\,\,} & \gamma_{\mc{D}}\comp F\comp G \ar[d]^{\gamma_{\mc{D}}\compc a} \\
 & \gamma_{\mc{D}} }$$
commutes, so the left square in (\ref{comptMorf}) is commutative. This formally implies
the commutativity of the right square too. Indeed, this  means that the unit 
$\epsilon: F'\comp\gamma_{\mc{C}} \rightarrow \gamma_{D}\comp F$ of the absolute left Kan extension is the
adjoint natural transformation through $({F'},\mbb{R}G)$ of 
\begin{equation}\label{unit}\xymatrix@M=4pt@H=4pt@C=33pt{\gamma_{\mc{C}}\ar[r]^-{\gamma_{\mc{C}} \compc b} & \gamma_{\mc{C}}\comp G\comp F 
\ar[r]^-{\varepsilon\compc F} & 
\mbb{R}G\comp \gamma_{\mc{D}}\comp F } 
\end{equation}
Using the naturality of $\epsilon$ and the left square in (\ref{comptMorf}) we deduce the commutative diagram 
$$\xymatrix@M=4pt@H=4pt@C=38pt{ F'\comp \gamma_{\mc{C}}\ar[r]^-{(F'\comp\gamma_{\mc{C}}) \compc b} \ar[rd]_{\epsilon} & F'\comp\gamma_{\mc{C}}\comp G\comp F 
\ar[r]^-{F'\compc\varepsilon\compc F} \ar[rd]^{\epsilon\compc (G\comp F)} & 
F'\comp\mbb{R}G\comp \gamma_{\mc{D}}\comp F  \ar[r]^-{a'\compc (\gamma_{D}\comp F)} &  \gamma_{\mc{D}}\comp F\\
& \gamma_{\mc{D}}\comp F \ar[r]^-{(\gamma_{\mc{D}}\comp F)\compc b } & \gamma_{\mc{D}}\comp F\comp G\comp F \ar[ru]_{ \gamma_{\mc{D}}\compc a \compc F} & }$$
Since $(a\compc F)\comp (F\compc b)$ is the identity, it follows that the adjoint of (\ref{unit}) is $\epsilon$ as required.
\end{proof}

\begin{obs} An alternative proof of previous theorem may be obtained using the formalism of `mates'. More concretely, 
there are two double categories whose objects are categories and whose vertical morphisms are, respectively, right and 
left adjoints. They are related through the natural morphism that sends a right adjoint to its left adjoint and 
a vertical morphism to its \textit{mate}. We left as an exercise for the interested reader to deduce previous theorem from the 
fact that this morphism is indeed an isomorphism (\cite[Proposition 2.2]{KS}). In this setting, the commutativity of each of the 
two squares in (1) (which are indeed equivalent) just means that $\varepsilon$ and $\epsilon$ are mates.
\end{obs}

Putting together the previous two results, we obtain the following characterization of left derivability for a functor
possessing an absolutely right derivable right adjoint. 

\begin{cor}\label{characterizationLF} Let $F:\mc{C}\rightleftarrows \mc{D}: G$ be a pair of adjoint functors. Assume there exists $\mbb{R}G$, the absolute right derived functor of $G$. Then, the following are equivalent:
\begin{compactitem}
\item[1.] $\mbb{R}G$ admits a left adjoint
$F': \mc{C}[\mc{W}^{-1}]\rightarrow \mc{D}[\mc{W}^{-1}]$.
\item[2.] $F$ admits an absolute left derived functor $\mbb{L}F: \mc{C}[\mc{W}^{-1}]\rightarrow \mc{D}[\mc{W}^{-1}]$.
\end{compactitem}
In this case, $F'$ and $\mbb{L}F$ agree and their corresponding unit and adjunction morphisms may be chosen in such a way that the two squares in \emph{(\ref{comptMorf})} commute.
\end{cor}

We will also use the duals of Theorem \ref{criterionLF} and Corollary \ref{characterizationLF}, given below.

\begin{thm}\label{criterion}
Let $F:\mc{C}\rightleftarrows \mc{D}: G$ be a pair of adjoint functors. Assume there exists $\mbb{L}F$, the absolute left derived functor of $F$. If $\mbb{L}F$ admits a right adjoint $G'$ 
$$\mbb{L}F: \mc{C}[\mc{W}^{-1}]\rightleftarrows \mc{D}[\mc{W}^{-1}]: G'$$
then the absolute right derived functor $\mbb{R}G$ of $G$ exists, and it agrees with $G'$. In addition, the unit $\varepsilon:\gamma_{\mc{C}}\comp G \rightarrow G'\comp\gamma_{\mc{D}}$ may be chosen in such a way that the two squares in \emph{(\ref{comptMorf})} commute.
\end{thm}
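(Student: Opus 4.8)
This proposition is the precise dual of Theorem \ref{criterionLF}, so the plan is to transpose that proof, exchanging right and left Kan extensions (thus $Ran$ and $Lan$) and turning every unit into a counit. The one preliminary not stated verbatim above is the dual of Dubuc's Proposition \ref{Dubuc} for \emph{left} Kan extensions, which follows by applying \ref{Dubuc} in the opposite categories: given $K:\mc{M}\rightarrow\mc{C}$, $L:\mc{C}\rightarrow\mc{D}$, $T:\mc{M}\rightarrow\mc{A}$ with $Lan_K T$ existing, $Lan_{L\comp K}T$ exists if and only if $Lan_L Lan_K T$ does, in which case they agree, $Lan_L Lan_K T$ is absolute whenever $Lan_K T$ is absolute and $Lan_{L\comp K}T$ is, and the units may be chosen so that the unit of $Lan_{L\comp K}T$ is the unit of $Lan_L Lan_K T$ whiskered by $K$, composed after the unit of $Lan_K T$.

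First I would convert the two adjunctions into absolute Kan extensions by Proposition \ref{AdjKan}. From $(\mbb{L}F,G')$ we obtain $G'=Lan_{\mbb{L}F}1$ as an absolute left Kan extension, hence $S\comp G'=Lan_{\mbb{L}F}S$ for every $S:\mc{C}[\mc{W}^{-1}]\rightarrow\mc{E}$, with unit $S\compc b'$, where $b':1\rightarrow G'\comp\mbb{L}F$ is the adjunction unit; and from $(F,G)$ we obtain $G=Lan_F 1_{\mc{C}}$ absolutely, so $\gamma_{\mc{C}}\comp G=Lan_F\gamma_{\mc{C}}$ is an absolute left Kan extension with unit $\gamma_{\mc{C}}\compc b$, where $b:1_{\mc{C}}\rightarrow G\comp F$ is the unit of $(F,G)$. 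On the other hand, $\mbb{L}F$ being the absolute left derived functor of $F$ means $L\comp\mbb{L}F=Ran_{\gamma_{\mc{C}}}(L\comp\gamma_{\mc{D}}\comp F)$ for each $L:\mc{D}[\mc{W}^{-1}]\rightarrow\mc{E}$, with unit $L\compc\epsilon$, where $\epsilon:\mbb{L}F\comp\gamma_{\mc{C}}\rightarrow\gamma_{\mc{D}}\comp F$. Composing the universal properties of these Kan extensions yields natural bijections
$$Nat(S\comp G',L)\ \cong\ Nat(S,L\comp\mbb{L}F)\ \cong\ Nat(S\comp\gamma_{\mc{C}},L\comp\gamma_{\mc{D}}\comp F),$$
natural in $S$ and $L$ (the first using $S\comp G'=Lan_{\mbb{L}F}S$, the second using $L\comp\mbb{L}F=Ran_{\gamma_{\mc{C}}}(L\comp\gamma_{\mc{D}}\comp F)$), which exhibit $G'$ as the absolute left Kan extension $Lan_{\gamma_{\mc{D}}\comp F}\gamma_{\mc{C}}$, with unit $(G'\compc\epsilon)\comp(b'\compc\gamma_{\mc{C}}):\gamma_{\mc{C}}\rightarrow G'\comp\gamma_{\mc{D}}\comp F$ (factoring through $G'\comp\mbb{L}F\comp\gamma_{\mc{C}}$).

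Next I would invoke the dual Dubuc proposition with $K=F$, $L=\gamma_{\mc{D}}$, $T=\gamma_{\mc{C}}$. Since $Lan_F\gamma_{\mc{C}}=\gamma_{\mc{C}}\comp G$ exists and is absolute and $Lan_{\gamma_{\mc{D}}\comp F}\gamma_{\mc{C}}=G'$ exists, it follows that $Lan_{\gamma_{\mc{D}}}(\gamma_{\mc{C}}\comp G)=\mbb{R}G$ exists, is absolute, and agrees with $G'$; thus $G'$ is the absolute right derived functor of $G$. For the compatibility statement, the unit clause of the dual Dubuc proposition lets us choose $\varepsilon:\gamma_{\mc{C}}\comp G\rightarrow G'\comp\gamma_{\mc{D}}$, the unit of $\mbb{R}G$, so that the unit of $G'$ as $Lan_{\gamma_{\mc{D}}\comp F}\gamma_{\mc{C}}$ factors as $(\varepsilon\compc F)\comp(\gamma_{\mc{C}}\compc b)$; identifying this with the formula $(G'\compc\epsilon)\comp(b'\compc\gamma_{\mc{C}})$ from the previous step is exactly the commutativity of the right square of \emph{(\ref{comptMorf})}. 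The left square is then obtained from the right one by transporting it across the adjunction $(\mbb{L}F,G')$, whiskering by $F$, inserting the naturality of $\epsilon$, and cancelling a triangle identity of $(F,G)$ — precisely the argument dual to the concluding pentagon in the proof of Theorem \ref{criterionLF}, with $a:F\comp G\rightarrow 1_{\mc{D}}$ and $a':\mbb{L}F\comp G'\rightarrow 1$ the respective counits.

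Since the whole thing is a formal transposition of a proof already carried out, there is no real conceptual difficulty; the work lies entirely in the bookkeeping. The step demanding the most care is the last one — rebuilding the concluding pentagon with all arrows reversed so as to pass from the right square of \emph{(\ref{comptMorf})} to the left square — together with being scrupulous about invoking Proposition \ref{Dubuc} only in its dual variance and about keeping straight which $2$-cell plays the role of a unit and which of a counit.
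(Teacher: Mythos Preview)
Your proposal is correct and matches the paper's treatment exactly: the paper gives no separate proof of this theorem, simply declaring it the dual of Theorem~\ref{criterionLF}, and your sketch spells out that dualization in detail. The only slip is in your description of the final pentagon: dualizing properly, you whisker the right square by $G$ (not $F$), invoke the naturality of $\varepsilon$ (not $\epsilon$), and cancel the triangle identity $(G\compc a)\comp(b\compc G)=1_G$ --- but since you explicitly call this ``precisely the argument dual to the concluding pentagon,'' your intent is clear and the bookkeeping goes through as you say.
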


\begin{cor}\label{characterization} Let $F:\mc{C}\rightleftarrows \mc{D}: G$ be a pair of adjoint functors. Assume there exists $\mbb{L}F$, the absolute left derived functor of $F$. Then, the following are equivalent:
\begin{compactitem}
\item[1.] $\mbb{L}F$ admits a right adjoint
$G': \mc{D}[\mc{W}^{-1}]\rightarrow \mc{C}[\mc{W}^{-1}]$.
\item[2.] $G$ admits an absolute right derived functor $\mbb{R}G: \mc{D}[\mc{W}^{-1}]\rightarrow \mc{C}[\mc{W}^{-1}]$.
\end{compactitem}
In this case, $G'$ and $\mbb{R}G$ agree and their corresponding unit and adjunction morphisms may be chosen in such a way that the two squares in \emph{(\ref{comptMorf})} commute.
\end{cor}

We now turn to the composition of derived functors obtained in this way. Given functors $F_1:\mc{C}\rightarrow \mc{D}$ and $F_2:\mc{D}\rightarrow \mc{E}$
such that $\mbb{L}F_1$, $\mbb{L}F_2$ and $\mbb{L}(F_2\comp F_1)$ exist, there is a natural morphism
\begin{equation}\label{compL} \mbb{L}F_2\comp \mbb{L}F_1\longrightarrow \mbb{L}(F_2\comp F_1) \end{equation}
obtained from $(\epsilon_{\mbb{L}F_2}\compc F_1)\comp (\mbb{L}F_2\compc\epsilon_{\mbb{L}F_1}) :\mbb{L}F_2\comp\mbb{L}F_1\comp \gamma_{\mc{C}}\rightarrow \gamma_{\mc{E}}\comp F_2\comp F_1$ through the bijection
$$Nat(\mbb{L}F_2\comp\mbb{L}F_1,\mbb{L}(F_2\comp F_1))\longrightarrow  Nat(\mbb{L}F_2\comp\mbb{L}F_1\comp \gamma_{\mc{C}},\gamma_{\mc{E}}\comp F_2\comp F_1)$$

Analogously, given $G_1:\mc{D}\rightarrow \mc{C}$ and $G_2:\mc{E}\rightarrow \mc{D}$ such that there exist $\mbb{R}G_1$, $\mbb{R}G_2$ and $\mbb{R}(G_1 \comp G_2)$, there is a natural morphism
\begin{equation}\label{compR} \mbb{R}(G_1\comp G_2)\longrightarrow \mbb{R}G_1\comp \mbb{R}G_2
\end{equation}

Note that neither (\ref{compL}) nor (\ref{compR}) need to be an isomorphism for general $F_1$, $F_2$, $G_1$ and $G_2$. In our case we have the

\begin{prop}\label{composicionDerivados}
Consider two pairs of adjoint functors $(F_1,G_1)$ and $(F_2,G_2)$
$$\xymatrix@M=4pt@H=4pt@C=33pt{ \mc{C} \ar@<0.5ex>[r]^-{F_{1}}  & \mc{D} \ar@<0.5ex>[l]^-{G_1} \ar@<0.5ex>[r]^-{F_{2}}  & \mc{E} \ar@<0.5ex>[l]^-{G_2} }$$
such that the absolute derived functors $\mbb{L}F_1$, $\mbb{L}F_2$,
$\mbb{R}G_1$, $\mbb{R}G_2$ and $\mbb{R}(G_1\comp G_2)$ exist. If \emph{(\ref{compR})} is an isomorphism, then there exists the absolute left derived functor $\mbb{L}(F_2\comp F_1)$ and \emph{(\ref{compL})} is an isomorphism as well.
\end{prop}

\begin{proof}
By the Derived Adjunction Theorem we have the adjoint pairs of functors
$(\mbb{L}F_1, \mbb{R}G_1)$ and $(\mbb{L}F_2, \mbb{R}G_2)$.
Since the composition of adjunctions is again an adjunction, we have also the adjoint pair $(\mbb{L}F_2\comp\mbb{L}F_1, \mbb{R}G_1\comp\mbb{R}G_2)$.
If (\ref{compR}) is an isomorphism, it turns out that $\mbb{L}F_2\comp\mbb{L}F_1$
is left adjoint to $\mbb{R}(G_1\comp G_2)$. But then, by Corollary \ref{characterizationLF} applied to $(F_2\comp F_1,G_1\comp G_2)$ we have that 
$\mbb{L}F_2\comp\mbb{L}F_1$ is the absolute left derived functor of $F_2\comp F_1$.
\end{proof}

It is left to the reader to establish the dual of previous proposition, concerning the composition
of absolute right derived functors. 


\section{Equivalence between Grothendieck and Quillen homotopy colimits.}\label{secGQ}

A nice particular case of Corollary \ref{characterizationLF} occurs when in an adjunction $F:\mc{C}\rightleftarrows \mc{D}: G$ the right adjoint preserves weak equivalences. In this case
we deduce that $F$ admits an absolute left derived functor if and only if
$G:\mc{D}[\mc{W}^{-1}]\rightarrow \mc{C}[\mc{W}^{-1}]$ admits a left adjoint, and in this case both agree.
In the setting of homotopy colimits, a consequence of this fact is that
the notions of homotopy colimit corresponding to Grothendieck derivators and
model categories are indeed equivalent. We begin by recalling how these two notions are def{i}ned.\\

Given a small category $I$, the class $\mc{W}$ of weak equivalences of $\mc{C}$ induces object-wise a class of morphisms in the 
category of functors from $I$ to $\mc{C}$, $\mc{C}^I$, called the class of \textit{pointwise weak equivalences} and denoted also by $\mc{W}$.   
More concretely, a pointwise weak equivalence of $\mc{C}^I$ is a natural transformation $\lambda:X\rightarrow Y$ such that $\lambda_i\in\mc{W}$ for each $i\in I$.
Note that the constant diagram functor $c_I:\mc{C}\rightarrow\mc{C}^I$, def{i}ned as $(c_I(x))(i)=x$ for all $i\in I$, is then weak equivalence preserving. 
We also denote by $c_I$ the induced functor ${c}_I:\mc{C}[\mc{W}^{-1}]\rightarrow \mc{C}^I[\mc{W}^{-1}] $ on localized categories, and call it the 
\textit{localized constant diagram functor}.

\begin{defi} A \textit{Grothendieck homotopy colimit} is def{i}ned as the left 
adjoint $\mrm{hocolim}_I:\mc{C}^I[\mc{W}^{-1}]\rightarrow \mc{C}[\mc{W}^{-1}]$ of the localized constant diagram functor 
$c_I:\mc{C}[\mc{W}^{-1}]\rightarrow \mc{C}^I[\mc{W}^{-1}]$ (if it exists).\\
On the other hand, if there exists the colimit $\mrm{colim}_I:\mc{C}^I\rightarrow \mc{C}$, a \textit{Quillen homotopy colimit} is def{i}ned as the 
absolute left derived functor $\mbb{L}\mrm{colim}_I:\mc{C}^I[\mc{W}^{-1}]\rightarrow \mc{C}[\mc{W}^{-1}]$ of the colimit 
$\mrm{colim}_I$ (if it exists).
\end{defi}

\begin{prop}\label{GQ} Assume there exists $\mrm{colim}_I:\mc{C}^I\rightarrow \mc{C}$. 
Then a functor $\mc{C}^I[\mc{W}^{-1}]\rightarrow \mc{C}[\mc{W}^{-1}]$ is a Grothendieck homotopy colimit if and only if it is a Quillen homotopy colimit.
\end{prop}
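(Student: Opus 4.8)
The plan is to apply Corollary~\ref{characterizationLF} to the adjunction $(\mrm{colim}_I, c_I)$. The key observation is that the right adjoint $c_I:\mc{C}\rightarrow\mc{C}^I$ preserves weak equivalences: if $\lambda:x\rightarrow y$ lies in $\mc{W}$, then $(c_I\lambda)_i=\lambda\in\mc{W}$ for every $i\in I$, so $c_I\lambda$ is a pointwise weak equivalence. Therefore $c_I$ trivially admits an absolute right derived functor, namely $\mbb{R}c_I=c_I:\mc{C}[\mc{W}^{-1}]\rightarrow\mc{C}^I[\mc{W}^{-1}]$, the localized constant diagram functor. (A weak-equivalence-preserving functor is its own absolute right derived functor: the identity natural transformation exhibits $\gamma\comp c_I$ as the absolute left Kan extension of $\gamma\comp c_I$ along $\gamma$, because $\gamma\comp c_I$ already factors through $\gamma$ and absoluteness of this Kan extension is immediate since composing with any $S$ leaves the factorization intact.)

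Once this is in place, Corollary~\ref{characterizationLF} applied to the pair $(F,G)=(\mrm{colim}_I, c_I)$ says that the following are equivalent: (1) $\mbb{R}c_I=c_I:\mc{C}[\mc{W}^{-1}]\rightarrow\mc{C}^I[\mc{W}^{-1}]$ admits a left adjoint $F':\mc{C}^I[\mc{W}^{-1}]\rightarrow\mc{C}[\mc{W}^{-1}]$; and (2) $\mrm{colim}_I$ admits an absolute left derived functor $\mbb{L}\mrm{colim}_I$; and moreover in that case $F'$ and $\mbb{L}\mrm{colim}_I$ agree. But condition (1) is precisely the statement that a Grothendieck homotopy colimit exists (a left adjoint of the localized constant diagram functor), and condition (2) is precisely the statement that a Quillen homotopy colimit exists. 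Since any two left adjoints of the same functor are canonically isomorphic, and since a functor $\mc{C}^I[\mc{W}^{-1}]\rightarrow\mc{C}[\mc{W}^{-1}]$ that is a Grothendieck homotopy colimit is a left adjoint of $c_I$, the corollary identifies it with $\mbb{L}\mrm{colim}_I$; conversely any absolute left derived functor of $\mrm{colim}_I$ is, by the corollary, a left adjoint of $c_I$, hence a Grothendieck homotopy colimit.

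I do not expect any genuine obstacle here: the content of the proposition is entirely extracted from Corollary~\ref{characterizationLF}, and the only thing to check by hand is the elementary fact that $c_I$ preserves pointwise weak equivalences and is therefore its own absolute right derived functor. The one mild subtlety worth spelling out is that the corollary is an ``if and only if'' at the level of \emph{existence}, together with agreement of the two functors when they exist; so the proof should explicitly note that ``is a Grothendieck homotopy colimit'' and ``is a Quillen homotopy colimit'' are each characterized by a universal property (left adjoint of $c_I$, resp.\ absolute left derived functor of $\mrm{colim}_I$) that determines the functor up to canonical isomorphism, so that the equivalence of the two universal properties furnished by the corollary immediately yields the equivalence of the two classes of functors. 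No model structure on $\mc{C}^I$ is needed, which is the whole point.
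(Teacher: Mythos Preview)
Your proof is correct and follows exactly the same approach as the paper: apply Corollary~\ref{characterizationLF} to the adjunction $(\mrm{colim}_I, c_I)$, using that $c_I$ preserves weak equivalences and is therefore its own absolute right derived functor. You spell out a few details (why weak-equivalence-preserving functors are absolutely right derivable, and the uniqueness of adjoints/Kan extensions up to isomorphism) that the paper leaves implicit, but the argument is identical.
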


\begin{proof} Since by hypothesis $\mrm{colim}_I:\mc{C}^I\rightarrow \mc{C}$ exists, we have an adjunction 
$\mrm{colim}_I:\mc{C}^I\rightleftarrows \mc{C}: c_I$. Since $c_I$ is weak equivalence preserving, it has in particular
an absolute right derived functor and $\mbb{R}c_I = c_I : \mc{C}^I[\mc{W}^{-1}]\rightarrow \mc{C}[\mc{W}^{-1}]$. Hence the result follows directly from Corollary \ref{characterizationLF}.
\end{proof}

Recall that the inverse image of a functor $f:I\rightarrow J$ of small categories is $f^\ast: \mc{C}^J\rightarrow \mc{C}^I$ def{i}ned as $(f^{\ast}Y)(i)=Y(f(i))$. 
As before, $f^\ast$ clearly preserves pointwise weak equivalences.

\begin{defi}
A \textit{Grothendieck homotopy left Kan extension} along $f$, $Ho f_!:\mc{C}^I[\mc{W}^{-1}]\rightarrow \mc{C}^J[\mc{W}^{-1}]$, is def{i}ned as the 
left adjoint of $f^\ast: \mc{C}^J[\mc{W}^{-1}]\rightarrow \mc{C}^I[\mc{W}^{-1}]$  (if it exists).\\ 
On the other hand, if there exists the left Kan extension $f_!: \mc{C}^I\rightarrow \mc{C}^J$, a \textit{Quillen homotopy left Kan extension} along $f$ is def{i}ned as 
the absolute left derived functor $\mbb{L} f_!:\mc{C}^I[\mc{W}^{-1}]\rightarrow \mc{C}^J[\mc{W}^{-1}]$ of $f_!$ (if it exists).
\end{defi}

Analogously, we deduce from Corollary \ref{characterizationLF} the 

\begin{prop} Assume there exists $f_!: \mc{C}^I\rightarrow \mc{C}^J$. Then a functor $\mc{C}^I[\mc{W}^{-1}]\rightarrow \mc{C}^J[\mc{W}^{-1}]$
is a Grothendieck homotopy left Kan extension if and only if it is a Quillen homotopy left Kan extension.
\end{prop}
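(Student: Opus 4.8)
The plan is to mimic exactly the proof of the previous proposition, replacing the constant diagram adjunction $(\mathrm{colim}_I, c_I)$ with the left Kan extension adjunction $(f_!, f^\ast)$. First I would observe that the hypothesis gives us the left Kan extension $f_!:\mc{C}^I\rightarrow\mc{C}^J$, which is by definition left adjoint to the inverse image functor $f^\ast:\mc{C}^J\rightarrow\mc{C}^I$; so we have an honest adjunction $f_!:\mc{C}^I\rightleftarrows\mc{C}^J:f^\ast$ at the level of the non-localized functor categories.

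Next I would note that $f^\ast$ preserves pointwise weak equivalences (this is remarked in the text just before the statement, since $(f^\ast\lambda)_i = \lambda_{f(i)}\in\mc{W}$ whenever $\lambda$ is a pointwise weak equivalence). Consequently $f^\ast$ trivially admits an absolute right derived functor, namely the induced functor on localizations: $\mbb{R}f^\ast = f^\ast:\mc{C}^J[\mc{W}^{-1}]\rightarrow\mc{C}^I[\mc{W}^{-1}]$. (Formally, a weak-equivalence-preserving functor factors through the localizations, and this factorization is an absolute left Kan extension along $\gamma$ because $\gamma$ is already essentially surjective on objects in the relevant sense; this is the same observation used for $c_I$ in the preceding proof and for $\mbb{R}c_I$.)

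Then I would apply corollary \ref{characterizationLF} to the adjoint pair $(f_!, f^\ast)$ with $G = f^\ast$ playing the role of the functor with an absolute right derived functor. The corollary says that $f^\ast = \mbb{R}f^\ast$ admits a left adjoint $\mc{C}^I[\mc{W}^{-1}]\rightarrow\mc{C}^J[\mc{W}^{-1}]$ if and only if $f_!$ admits an absolute left derived functor $\mbb{L}f_!$, and in that case the two functors coincide. Unwinding the definitions, "a left adjoint of $f^\ast:\mc{C}^J[\mc{W}^{-1}]\rightarrow\mc{C}^I[\mc{W}^{-1}]$" is precisely a Grothendieck homotopy left Kan extension along $f$, and "an absolute left derived functor of $f_!$" is precisely a Quillen homotopy left Kan extension along $f$. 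Hence a functor $\mc{C}^I[\mc{W}^{-1}]\rightarrow\mc{C}^J[\mc{W}^{-1}]$ is one if and only if it is the other, which is the assertion.

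I do not expect any genuine obstacle here: the proof is a direct transcription of the proof of the previous proposition, the only points requiring a word of justification being that $f^\ast$ preserves pointwise weak equivalences (immediate from the pointwise definition) and that a weak-equivalence-preserving functor is its own absolute right derived functor (already used in the text). If one wanted to be careful, the mildly technical step is spelling out why the induced functor on localized categories is genuinely an \emph{absolute} left Kan extension along $\gamma_{\mc{C}^J}$ rather than merely a left Kan extension — but this is exactly the content of Remark-level observations already invoked for $c_I$, so it can simply be cited.
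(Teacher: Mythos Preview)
Your proposal is correct and matches the paper's approach exactly: the paper itself dispatches this proposition with the single line ``Analogously, we deduce from corollary \ref{characterizationLF},'' i.e., by noting that $f^\ast$ preserves pointwise weak equivalences (hence $\mbb{R}f^\ast=f^\ast$) and applying corollary \ref{characterizationLF} to the adjunction $f_!\dashv f^\ast$, just as was done for $(\mathrm{colim}_I,c_I)$ in the preceding proposition.
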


In light of this equivalence, we deduce in next Corollary that Quillen homotopy left Kan extensions are always composable.

\begin{cor}\label{composers}
Assume given functors $f:I\rightarrow J$ and $g:J\rightarrow K$ such that there exist the absolute left derived functors 
$\mbb{L}f_{!}:\mc{C}^I[\mc{W}^{-1}]\rightarrow \mc{C}^J[\mc{W}^{-1}]$ and 
$\mbb{L}g_{!} : \mc{C}^J[\mc{W}^{-1}]\rightarrow \mc{C}^K[\mc{W}^{-1}]$. Then there exists the absolute left derived functor $\mbb{L}(g\comp f)_{!}$ and it agrees with $\mbb{L}g_!\comp\mbb{L}f_!$.
\end{cor}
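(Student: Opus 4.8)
The plan is to reduce the statement about composability of Quillen homotopy left Kan extensions to Proposition \ref{composicionDerivados}, whose hypotheses concern the composition of \emph{right} derived functors of the adjoint functors. First I would set up the two adjunctions: for $f:I\rightarrow J$ we have $f_!:\mc{C}^I\rightleftarrows\mc{C}^J:f^\ast$, and for $g:J\rightarrow K$ we have $g_!:\mc{C}^J\rightleftarrows\mc{C}^K:g^\ast$, so that composing gives $(g\comp f)_! = g_!\comp f_!$ left adjoint to $(g\comp f)^\ast = f^\ast\comp g^\ast$. Since the inverse image functors $f^\ast$ and $g^\ast$ preserve pointwise weak equivalences (as remarked just before the preceding definition), they descend to functors on the localized categories and their absolute right derived functors exist with $\mbb{R}f^\ast = f^\ast$ and $\mbb{R}g^\ast = g^\ast$. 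Likewise $\mbb{R}(f^\ast\comp g^\ast) = f^\ast\comp g^\ast$ exists, being the derived functor of a weak-equivalence-preserving functor.

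Next I would observe that the comparison morphism (\ref{compR}) for $G_1 = f^\ast$, $G_2 = g^\ast$ is the natural morphism $\mbb{R}(f^\ast\comp g^\ast)\rightarrow \mbb{R}f^\ast\comp\mbb{R}g^\ast$, and under the identifications above this is just the canonical morphism $f^\ast\comp g^\ast \rightarrow f^\ast\comp g^\ast$ induced by the identity units, hence an isomorphism. (Concretely: for a functor that preserves weak equivalences, its absolute right derived functor is the functor itself on localized categories with identity unit, and the comparison morphism between such derived functors is the identity.) This is the step I expect to require the most care — checking that the formally-defined comparison morphism (\ref{compR}) really is the identity under these identifications — but it is a routine unwinding of the universal property defining (\ref{compR}) together with the fact that all units involved are identities.

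With this in hand, I would apply Proposition \ref{composicionDerivados} to the two adjoint pairs $(f_!, f^\ast)$ and $(g_!, g^\ast)$: its hypotheses require the existence of $\mbb{L}f_!$, $\mbb{L}g_!$, $\mbb{R}f^\ast$, $\mbb{R}g^\ast$ and $\mbb{R}(f^\ast\comp g^\ast)$ — all verified above, with $\mbb{L}f_!$ and $\mbb{L}g_!$ existing by hypothesis of the corollary — and that (\ref{compR}) be an isomorphism, which we just checked. The conclusion of Proposition \ref{composicionDerivados} then gives exactly that the absolute left derived functor $\mbb{L}(g_!\comp f_!) = \mbb{L}((g\comp f)_!)$ exists and that (\ref{compL}) is an isomorphism, i.e.\ $\mbb{L}(g\comp f)_! \cong \mbb{L}g_!\comp\mbb{L}f_!$, which is the assertion of the corollary. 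Alternatively, and perhaps more cleanly, one can bypass (\ref{compR}) entirely by invoking the equivalence between Grothendieck and Quillen homotopy left Kan extensions from the preceding proposition: then $\mbb{L}f_!$ is left adjoint to $f^\ast$ and $\mbb{L}g_!$ is left adjoint to $g^\ast$ on localized categories, so $\mbb{L}g_!\comp\mbb{L}f_!$ is left adjoint to $f^\ast\comp g^\ast = (g\comp f)^\ast$, and since $(g\comp f)^\ast = \mbb{R}(g\comp f)^\ast$ preserves weak equivalences, Corollary \ref{characterizationLF} applied to the pair $((g\comp f)_!, (g\comp f)^\ast)$ identifies this left adjoint with $\mbb{L}(g\comp f)_!$. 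I would present this second argument as the main proof, as it avoids the bookkeeping with (\ref{compR}).
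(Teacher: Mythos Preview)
Your proposal is correct and your first argument is essentially identical to the paper's proof: the paper observes that $f^\ast$, $g^\ast$ and $f^\ast\comp g^\ast=(g\comp f)^\ast$ preserve weak equivalences, so $\mbb{R}(f^\ast\comp g^\ast)=f^\ast\comp g^\ast=\mbb{R}f^\ast\comp\mbb{R}g^\ast$ and (\ref{compR}) is an isomorphism, then applies Proposition \ref{composicionDerivados}. Your second argument is also correct but is not really a different route---it simply unpacks the proof of Proposition \ref{composicionDerivados} in this special case, since that proposition is itself proved by composing the derived adjunctions and invoking Corollary \ref{characterizationLF}.
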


\begin{proof} Since $f^\ast$, $g^\ast$ and $f^{\ast}\comp g^{\ast}= (g\comp f)^{\ast}$ are weak equivalence preserving functors, in particular $\mbb{R}(f^{\ast}\comp g^{\ast})=f^{\ast}\comp g^{\ast}= \mbb{R}f^{\ast}\comp \mbb{R}g^{\ast}$. Hence the corresponding composition morphism (\ref{compR}) is an isomorphism, and the result follows from Proposition \ref{composicionDerivados} applied to
$$\xymatrix@M=4pt@H=4pt@C=33pt{ \mc{C}^I \ar@<0.5ex>[r]^-{f_{!}}  & \mc{C}^J \ar@<0.5ex>[l]^-{f^{\ast}} \ar@<0.5ex>[r]^-{g_{!}}  & \mc{C}^K \ar@<0.5ex>[l]^-{g^{\ast}} }$$
\end{proof}


The dual results obtained using Corollary \ref{characterization} assert that, when both are defined, Gro\-then\-dieck homotopy limits (def{i}ned as right 
adjoints of the localized constant diagram functors) are equivalent to Quillen homotopy limits (def{i}ned as absolute right derived 
functors of limits). Also, Grothendieck 
homotopy right Kan extensions are equivalent to Quillen homotopy right Kan extensions, def{i}ned dually, and consequently Quillen homotopy right Kan extensions 
are always composable.

\begin{obs} Despite Gro\-then\-dieck and Quillen homotopy limits agree 
in case both are defined, that is, in case $\mc{C}$ is closed by limits, note that Grothendieck homotopy limits are
more general than Quillen homotopy limits. To see an example, consider the category $\mc{H}dg$ of 
 mixed Hodge complexes. By the results in \cite{R} it holds that Deligne's cosimplicial construction 
$\mbf{s}:\Dl\mc{H}dg\rightarrow \mc{H}dg$ is (after localizing by the weak equivalences) a Grothendieck homotopy limit. 

But $\mc{H}dg$ is not closed by limits of cosimplicial shape. 
The reason is that a mixed Hodge complex is given, among other things, by a couple of quasi-isomorphisms (relating its rational and complex components), 
and cosimplicial limits (which are in turn equalizers) do not preserve weak equivalences. 
So the Quillen homotopy limit of cosimplicial mixed Hodge complexes does not exist, but the 
Grothendieck homotopy limit $\mrm{holim}_{\Delta}$ does.   
\end{obs}


\section{Closed homotopy categories.}

Recall that a symmetric monoidal category $(\mc{C},\otimes:\mc{C}\times \mc{C}\rightarrow \mc{C})$ is said to be \textit{closed} if for each object $B$ of 
$\mc{C}$ the functor $-\otimes B:\mc{C}\rightarrow \mc{C}$ has a right adjoint $\underline{\mrm{Hom}}_{\mc{C}}(B,-)$. This means that there are natural bijections
$$ \mrm{Hom}_{\mc{C}}(A\otimes B , C) \longleftrightarrow  \mrm{Hom}_{\mc{C}}(A, \underline{\mrm{Hom}}_{\mc{C}}(B,C)) $$
In this case, $\underline{\mrm{Hom}}_{\mc{C}}(B,-)$ is also natural on $B$ producing a bifunctor 
$\underline{\mrm{Hom}}_{\mc{C}}(-,-):\mc{C}^{\comp}\times\mc{C}\rightarrow\mc{C}$.\\

If $\mc{W}$ is a class of morphisms in $\mc{C}$, a closed symmetric monoidal structure on $\mc{C}$ does not necessarily induce one on $\mc{C}[\mc{W}^{-1}]$. In case $\otimes$ passes to the localized category $\mc{C}[\mc{W}^{-1}]$, we deduce the following result.

\begin{prop}\label{DerHom} Let $(\mc{C},\otimes,\underline{\mrm{Hom}})$ be a closed symmetric monoidal category. Assume that
$(\mc{C}[\mc{W}^{-1}],\otimes^{\mbb{L}})$ is a symmetric monoidal category in which $-\otimes^{\mbb{L}}B$ is the absolute left 
derived functor of $-\otimes B$, for each object $B$ of $\mc{C}$. Then, the following are equivalent:
\begin{compactitem}
\item[1.] $(\mc{C}[\mc{W}^{-1}],\otimes^{\mbb{L}})$ is a \emph{closed} symmetric monoidal category, with internal hom 
$$\underline{\mrm{Hom}}_{\mc{C}[\mc{W}^{-1}]}(-,-): \mc{C}[\mc{W}^{-1}]^{\comp}\times\mc{C}[\mc{W}^{-1}]\rightarrow\mc{C}[\mc{W}^{-1}]$$
\item[2.] For each object $B$ of $\mc{C}$, the internal hom $\underline{\mrm{Hom}}_{\mc{C}}(B,-):\mc{C}\rightarrow \mc{C}$
has an absolute right derived functor $\mbb{R}\underline{\mrm{Hom}}_{\mc{C}}(B,-)$.
\end{compactitem}
In addition, if these equivalent conditions hold, then $\underline{\mrm{Hom}}_{\mc{C}[\mc{W}^{-1}]}(B,-)$ and 
$\mbb{R}\underline{\mrm{Hom}}_{\mc{C}}(B,-)$ agree for each object $B$ of $\mc{C}$. 
\end{prop}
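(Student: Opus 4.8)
The plan is to apply Corollary \ref{characterization} (the dual of the derivability criterion) to each of the adjunctions $-\otimes B : \mc{C}\rightleftarrows \mc{C} : \underline{\mrm{Hom}}_{\mc{C}}(B,-)$, one for each object $B$ of $\mc{C}$, and then to assemble the resulting functors into a bifunctor. First I would fix an object $B$ and observe that, by hypothesis, $-\otimes^{\mbb{L}}B$ is the absolute left derived functor of $-\otimes B$; hence the pair $(-\otimes B,\ \underline{\mrm{Hom}}_{\mc{C}}(B,-))$ is a pair of adjoint functors whose left member is absolutely left derivable. Corollary \ref{characterization} then says precisely that condition 1 restricted to $B$ (namely, that $-\otimes^{\mbb{L}}B$ admits a right adjoint $\underline{\mrm{Hom}}_{\mc{C}[\mc{W}^{-1}]}(B,-)$) is equivalent to condition 2 restricted to $B$ (namely, that $\underline{\mrm{Hom}}_{\mc{C}}(B,-)$ has an absolute right derived functor), and that in that case the two functors agree.

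Next I would package the pointwise equivalence into the stated equivalence of conditions 1 and 2. For the implication $1\Rightarrow 2$ this is immediate: if $(\mc{C}[\mc{W}^{-1}],\otimes^{\mbb{L}})$ is closed, then for every $B$ the functor $-\otimes^{\mbb{L}}B$ has a right adjoint, so by the corollary $\underline{\mrm{Hom}}_{\mc{C}}(B,-)$ is absolutely right derivable, and $\mbb{R}\underline{\mrm{Hom}}_{\mc{C}}(B,-)\simeq\underline{\mrm{Hom}}_{\mc{C}[\mc{W}^{-1}]}(B,-)$. For $2\Rightarrow 1$, the corollary gives, for each $B$, a right adjoint $G_B'$ of $-\otimes^{\mbb{L}}B$ on $\mc{C}[\mc{W}^{-1}]$, agreeing with $\mbb{R}\underline{\mrm{Hom}}_{\mc{C}}(B,-)$. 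Since a functor that admits an objectwise right adjoint has a right adjoint which is then automatically a bifunctor in the standard way (the parameter $B$ enters contravariantly), one obtains $\underline{\mrm{Hom}}_{\mc{C}[\mc{W}^{-1}]}(-,-):\mc{C}[\mc{W}^{-1}]^{\comp}\times\mc{C}[\mc{W}^{-1}]\rightarrow\mc{C}[\mc{W}^{-1}]$ making $\otimes^{\mbb{L}}$ closed. The final ``in addition'' clause is then exactly the agreement statement furnished object by object by the corollary.

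The main obstacle I expect is the bookkeeping in the step ``objectwise right adjoint $\Rightarrow$ right bifunctor'': one must check that the functors $G_B'$ extend to an honest bifunctor on $\mc{C}[\mc{W}^{-1}]^{\comp}\times\mc{C}[\mc{W}^{-1}]$ and that the resulting adjunction is compatible with the symmetric monoidal structure (i.e.\ that $\otimes^{\mbb{L}}$ becomes \emph{closed} symmetric monoidal, not merely that each $-\otimes^{\mbb{L}}B$ has a right adjoint). The cleanest way to handle this is the usual parametrized-adjunction argument: given the adjunctions $-\otimes^{\mbb{L}}B\dashv G_B'$ for all $B$ and the fact that $-\otimes^{\mbb{L}}-$ is a bifunctor, the right adjoints automatically organize into a bifunctor contravariant in the first slot, and the monoidal coherence (associativity, unit, symmetry) of $\otimes^{\mbb{L}}$ transports formally to the expected coherence for the internal hom; this is standard once the pointwise adjunctions are in hand, so I would only sketch it. A minor secondary point is to note that the hypothesis guarantees $-\otimes B$ is defined on all of $\mc{C}$ (so that $\underline{\mrm{Hom}}_{\mc{C}}(B,-)$ exists as a genuine right adjoint on $\mc{C}$), which is what lets us invoke Corollary \ref{characterization} in the first place.
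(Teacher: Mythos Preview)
Your approach is correct and essentially identical to the paper's: both fix an object $B$, apply Corollary \ref{characterization} to the adjunction $-\otimes B \dashv \underline{\mrm{Hom}}_{\mc{C}}(B,-)$ using the hypothesis that $-\otimes^{\mbb{L}}B$ is the absolute left derived functor, and read off the equivalence and agreement statement. The paper's proof is in fact terser than yours---it leaves the parametrized-adjunction step (assembling the objectwise right adjoints into a bifunctor) entirely implicit, so your added discussion of that point is a mild elaboration rather than a departure.
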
 

\begin{proof} For a f{i}xed object $B$ of $\mc{C}$, the left adjoint in the adjunction 
$-\otimes B:\mc{C}\rightleftarrows \mc{C}:\underline{\mrm{Hom}}_{\mc{C}}(B,-)$ admits by assumption an
absolute left derived functor $\mbb{L}(-\otimes B) = -\otimes^{\mbb{L}} B$. By Corollary \ref{characterization}, 
it follows that $-\otimes^{\mbb{L}} B: \mc{C}[\mc{W}^{-1}]\rightarrow\mc{C}[\mc{W}^{-1}]$ has a right adjoint
$\underline{\mrm{Hom}}_{\mc{C}[\mc{W}^{-1}]}(B,-)$ if and only $\underline{\mrm{Hom}}_{\mc{C}}(B,-)$ has an absolute right
derived functor $\mbb{R}\underline{\mrm{Hom}}_{\mc{C}}(B,-)$, and in this case they agree.
\end{proof}

\subsection{The internal hom of derived Morita theory.}\label{dM}

In case that $(\mc{C},\mc{W})$ is a \textit{closed symmetric monoidal model category}, $\mc{C}[\mc{W}^{-1}]$ does inherit a closed symmetric monoidal structure from $\mc{C}$. Indeed, it is given by the (absolute) left derived functor of $\otimes:\mc{C}\times \mc{C}\rightarrow \mc{C}$ and the (absolute) right derived functor of $\underline{\mrm{Hom}}:\mc{C}^{\comp}\times \mc{C}\rightarrow \mc{C}$ (see \cite[Theorem 4.3.2]{Ho}).

However, in some interesting situations one encounters a model category $(\mc{C},\mc{W})$ such that $\mc{C}$ is also closed symmetric monoidal, but the two 
structures are not compatible and consequently $(\mc{C},\mc{W})$ is not a symmetric monoidal model category. This is precisely the case of derived Morita theory, 
which is developed in \cite{T} in terms of dif{f}erential graded categories (or dg-categories for short).\\

Recall that a  dg-category $\mc{A}$ is a category enriched over the category $\mrm{C}(k)$ of complexes of $k$-modules, where $k$ is some f{i}xed ring.
From a dg-category $\mc{A}$ one can construct a usual category $[\mc{A}]$ with same objects as $\mc{A}$, and morphisms $\mrm{Hom}_{[\mc{A}]}(x,y) = \mrm{H}^0 \mrm{Hom}_{\mc{A}}(x,y)$.
 
As explained in \cite{T}, derived Morita theory may be established through a suitable homotopy theory of dg-categories. To do this, a \textit{weak equivalence} 
of dg-categories is def{i}ned as a dg-functor $f:\mc{A}\rightarrow \mc{B}$ such that $\mrm{Hom}(f)$ is a quasi-isomorphism of complexes, and $[f]$ is essentially 
surjective (hence an equivalence of categories). We write $Ho(dgcat)=dgcat[\mc{W}^{-1}]$.\\

With this notion of weak equivalences, $(dgcat,\mc{W})$ is indeed a model category (see \cite{Ta1}). However, with this model structure $
\otimes: dgcat\times dgcat\rightarrow dgcat$ does not preserve cof{i}brant objects, so $(dgcat,\mc{W})$ is not a closed symmetric monoidal model category.

The situation is not as bad though, since for a cof{i}brant dg-category $\mc{A}$ it holds that  $\mc{A}\otimes -: dgcat \rightarrow dgcat$ still preserves weak 
equivalences. This readily implies that for an arbitrary dg-category $\mc{B}$, $-\otimes \mc{B}$ preserves weak equivalences between cof{i}brant objects. 
Then, it follows from Theorem \ref{QAT} and Remark \ref{QAT2} the

\begin{prop} Given a dg-category $\mc{B}$,  $-\otimes \mc{B} : dgcat \rightarrow dgcat$ admits an absolute left derived functor  
$-\otimes^{\mbb{L}} \mc{B} : Ho(dgcat)  \rightarrow Ho(dgcat)$.
\end{prop}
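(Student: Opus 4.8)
The plan is to derive the statement from Theorem~\ref{QAT} together with Remark~\ref{QAT2}, using the fact recalled just before the statement: $-\otimes\mc{B}:dgcat\rightarrow dgcat$ sends weak equivalences between cofibrant objects to weak equivalences. Since $(dgcat,\mc{W})$ is a model category (by \cite{Ta1}), it has a cofibrant replacement functor $Q$ equipped with a natural weak equivalence $QX\rightarrow X$; by two-out-of-three, $Q$ then sends an arbitrary weak equivalence to a weak equivalence between cofibrant objects. Consequently $\gamma_{dgcat}\comp(-\otimes\mc{B}):dgcat\rightarrow Ho(dgcat)$ sends weak equivalences between cofibrant objects to isomorphisms, and $\gamma_{dgcat}\comp(-\otimes\mc{B})\comp Q$ sends \emph{all} weak equivalences of $dgcat$ to isomorphisms.

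First I would apply Theorem~\ref{QAT} to the functor $\gamma_{dgcat}\comp(-\otimes\mc{B})$: its left derived functor $Ran_{\gamma_{dgcat}}(\gamma_{dgcat}\comp(-\otimes\mc{B}))$ exists and is computed by composing with a cofibrant replacement, hence it is the functor $Ho(dgcat)\rightarrow Ho(dgcat)$ induced by $\gamma_{dgcat}\comp(-\otimes\mc{B})\comp Q$ (which descends along $\gamma_{dgcat}$ by the remark just made). By the definitions of Section~\ref{secPreliminaries}, this functor is precisely the total left derived functor $-\otimes^{\mbb{L}}\mc{B}=\mbb{L}(-\otimes\mc{B})$.

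It remains to see that $-\otimes^{\mbb{L}}\mc{B}$ is \emph{absolute}, which is exactly the content of Remark~\ref{QAT2}: a derived functor obtained through a cofibrant replacement in a model category is an absolute Kan extension. Concretely, for any $S:Ho(dgcat)\rightarrow\mc{E}$ the functor $S\comp\gamma_{dgcat}\comp(-\otimes\mc{B})$ still sends weak equivalences between cofibrant objects to isomorphisms, so Theorem~\ref{QAT} applies to it as well and gives $Ran_{\gamma_{dgcat}}(S\comp\gamma_{dgcat}\comp(-\otimes\mc{B}))=S\comp(-\otimes^{\mbb{L}}\mc{B})$, which is the defining property of an absolute left derived functor. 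I do not anticipate a real obstacle: the proposition is essentially a bookkeeping consequence of Quillen's existence theorem in the sharpened (absolute) form of Remark~\ref{QAT2}, the only mildly delicate point being the passage from ``left derived functor of a functor with arbitrary target'' in Theorem~\ref{QAT} to ``total left derived functor landing in $Ho(dgcat)$'', which is handled by applying the theorem to $\gamma_{dgcat}\comp(-\otimes\mc{B})$ and noting that its value descends along $\gamma_{dgcat}$.
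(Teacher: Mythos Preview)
Your proposal is correct and follows essentially the same approach as the paper: the paper simply states that the proposition follows from Theorem~\ref{QAT} and Remark~\ref{QAT2}, given that $-\otimes\mc{B}$ preserves weak equivalences between cofibrant objects. You have unpacked this one-line justification carefully, including the passage to the total derived functor via $\gamma_{dgcat}$, but the argument is the same.
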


With this derived tensor product $Ho(dgcat)$ becomes a \textit{closed} symmetric monoidal category (see \cite{T}). For each 
dg-category $\mc{B}$, the right adjoint $\mc{R}\underline{\mrm{Hom}}(\mc{B},-)$ to $-\otimes^{\mbb{L}} \mc{B}$ is explicitly  
constructing explicitly in loc. cit. as follows. First, one considers the dg-category 
$Int((\mc{B}\otimes^{\mathbb{L}}\mc{C}^{\comp})-mod)$ with objects the cof{i}brant $(\mc{B}\otimes^{\mathbb{L}}\mc{C})^{\comp}$-modules and
$$\mrm{Hom}_{Int((\mc{B}\otimes^{\mathbb{L}}\mc{C}^{\comp})-mod)}(E,F) = \underline{\mrm{Hom}}(E,F)$$
where the right hand side is the $\mrm{C}(k)$-valued hom of $(\mc{B}\otimes^{\mathbb{L}}\mc{C}^{\comp})-mod$.
Then, $\mc{R}\underline{\mrm{Hom}}(\mc{B},\mc{C})$ is the full sub-dg-category  $Int((\mc{B}\otimes^{\mathbb{L}}\mc{C}^{\comp})-mod)^{rqr}$ of 
$Int((\mc{B}\otimes^{\mathbb{L}}\mc{C}^{\comp})-mod)$ formed by right quasi-representable modules.\\

Since $\mc{R}\underline{\mrm{Hom}}(-,-)$ is not constructed through resolutions, it was not clear 
whether it was or not the right derived functor of $\underline{\mrm{Hom}}(\mc{B},-)$ (see \cite{Ta2}).
But a direct consequence of Proposition \ref{DerHom} is the

\begin{cor}\label{Toen} Given a dg-category $\mc{B}$, the internal hom $\mc{R}\underline{\mrm{Hom}}(\mc{B},-):Ho(dgcat)\rightarrow Ho(dgcat)$ of derived Morita 
theory is the absolute right derived functor $\mathbb{R}\underline{\mrm{Hom}}(\mc{B},-)$ of the internal hom 
$\underline{\mrm{Hom}}(\mc{B},-): dgcat\rightarrow dgcat$ of dg-categories.
\end{cor}

It also holds that $\mc{R}\underline{\mrm{Hom}}(-,-):Ho(dgcat)^{\comp}\times Ho(dgcat)\rightarrow Ho(dgcat)$ is the absolute 
right derived functor of $\underline{\mrm{Hom}}(-,-):dgcat^{\comp}\times dgcat\rightarrow dgcat$. 

This may be proved using previous result and the fact that the unit of the absolute right Kan extension
$-\otimes^{\mbb{L}} \mc{B}$ is natural on $\mc{B}$.


\end{document}